\theoremstyle{plain}
\newtheorem{thm}{\protect\theoremname}[section]
\theoremstyle{definition}
\newtheorem{defn}[thm]{\protect\definitionname}
\theoremstyle{plain}
\newtheorem{lem}[thm]{\protect\lemmaname}
\theoremstyle{plain}
\newtheorem{prop}[thm]{\protect\propositionname}
\theoremstyle{plain}
\newtheorem{cor}[thm]{\protect\corollaryname}
\theoremstyle{remark}
\newtheorem{rem}[thm]{Remark}
\newtheorem*{acknowledgement*}{Acknowledgement}
	 \renewcommand{\theenumi}{\emph{\roman{enumi})}} 
	\numberwithin{equation}{section}
\newcommand{\coloneqq}{\mathrel{\mathop:}=}
\newcommand{\hooklongrightarrow}{\mathbin{\lhook\joinrel\longrightarrow}}
\DeclareMathOperator{\Irr}{Irr}
\DeclareMathOperator{\Ind}{Ind}
\DeclareMathOperator{\Res}{Res}
\DeclareMathOperator{\res}{res}
\DeclareMathOperator{\Stab}{Stab}
\DeclareMathOperator{\tr}{tr}
\DeclareMathOperator{\Ker}{Ker}
\DeclareMathOperator{\ord}{ord}
\DeclareMathOperator{\GL}{GL}
\DeclareMathOperator{\M}{M}
\newcommand\leftexp[2]{\hspace{1pt}{\vphantom{#2}}^{#1}\hspace{-1pt}{#2}}
\newcommand{\C}{\ensuremath{\mathbb{C}}}
\newcommand{\F}{\ensuremath{\mathbb{F}}}
\newcommand{\Fp}{\ensuremath{\mathbb{F}_p}}
\newcommand{\N}{\ensuremath{\mathbb{N}}}
\newcommand{\Q}{\ensuremath{\mathbb{Q}}}
\newcommand{\Z}{\ensuremath{\mathbb{Z}}}
\newcommand{\cB}{\ensuremath{\mathcal{B}}}
\newcommand{\cC}{\ensuremath{\mathcal{C}}}
\newcommand{\cD}{\ensuremath{\mathcal{D}}}
\newcommand{\cH}{\ensuremath{\mathcal{H}}}
\newcommand{\cS}{\ensuremath{\mathcal{S}}}
\newcommand{\cZ}{\ensuremath{\mathcal{Z}}}
\renewcommand{\bar}[1]{\mkern 1.5mu\overline{\mkern-1mu#1\mkern-1mu}\mkern 1.5mu}
\newcommand{\llbracket}{[\mkern-2mu [}
\newcommand{\rrbracket}{]\mkern-2mu ]}
\date{\today}
		\newcommand{\zetapartial}{Z_{N;K}^{c}(s)}
		\newcommand{\lan}{\mathcal{L}}
		\newcommand{\struc}{\mathcal{M}}
		\newcommand{\Lan}{\lan_{\mathrm{an}}}
		\newcommand{\ldan}{\lan_{\mathrm{an}}^D}
		\newcommand{\mdan}{\struc^D_{\mathrm{an}}}
		\newcommand{\man}{\struc_{\mathrm{an}}}
		\newcommand{\eqrel}{\mathbin{\mathcal{E}}}
		\newcommand{\tuple}[1]{{\bm{\mathrm{#1}}}}
		\newcommand{\coho}[1]{\mathrm{H}^{#1}}
		\newcommand{\cocy}[1]{\mathrm{Z}^{#1}}
		\newcommand{\cobo}[1]{\mathrm{B}^{#1}}
		\newcommand{\twococyp}{Z_p}
		\newcommand{\twocobop}{B_p}
		\newcommand{\stgroup}{K}
		\newcommand{\uniform}{N}
		\newcommand{\indexPN}{r}
		\newcommand{\indexKpN}{{\indexPN}}
		\newcommand{\indexKN}{u}
		\newcommand{\indexGN}{{m}}
\DeclareMathOperator{\PIrr}{PIrr} 
\providecommand{\corollaryname}{Corollary}
\providecommand{\definitionname}{Definition}
\providecommand{\lemmaname}{Lemma}
\providecommand{\propositionname}{Proposition}
\providecommand{\theoremname}{Theorem}
\begin{document}

\title[Rationality of representation zeta functions]{Rationality of representation zeta functions of compact $p$-adic
analytic groups}

\author{Alexander Stasinski and Michele Zordan}
\address{Department of Mathematical Sciences,  Durham University, \allowbreak
	Durham, DH1 3LE, UK}
\email{alexander.stasinski@durham.ac.uk}

\address{Department of Mathematics, Imperial College, London SW7 2AZ, UK.}
\email{mzordan@imperial.ac.uk}
\subjclass[2020]{Primary 20E18; Secondary 20C15, 22E35, 20J06}

\begin{abstract}
We prove that for any FAb compact $p$-adic analytic group $G$, its
representation zeta function is a finite sum of terms $n_{i}^{-s}f_{i}(p^{-s})$,
where $n_{i}$ are natural numbers and $f_{i}(t)\in\Q(t)$ are
rational functions. Meromorphic continuation and rationality of the abscissa of the zeta function follow as corollaries. If $G$ is moreover a pro-$p$ group, we prove
that its representation zeta function is rational in $p^{-s}$. These results
were proved by Jaikin-Zapirain for $p>2$ or for $G$ uniform and
pro-$2$, respectively. We give a new proof which avoids the Kirillov
orbit method and works for all $p$.

\end{abstract}

\maketitle


\section{Introduction}

A group $G$ is called \emph{representation rigid} if the number $r_n(G)$ of isomorphism classes of
(continuous, if $G$ is topological) complex irreducible $n$-dimensional
representations of $G$ is finite, for each $n$. If $G$ is finitely generated profinite and FAb (i.e., $H/[H,H]$
is finite for every finite index subgroup $H$ of $G$), then $G$ is representation rigid (see \cite[Proposition~2]{BassLubotzkyMagidMozes}). For any representation rigid group $G$ we have the formal Dirichlet series
    \[
	Z_G(s)=\sum_{n=1}^{\infty}r_n(G)n^{-s}=\sum_{\rho\in \Irr(G)}\rho(1)^{-s},
    \]
where $\Irr(G)$ denotes the set of irreducible characters of $G$. If the sequence $R_N(G)=\sum_{i=1}^{N}r_i(G)$ grows at most polynomially, $Z_G(s)$ defines a holomorphic function $\zeta_{G}(s)$ on some right half-plane of $\C$, which is called the representation zeta function of~$G$.

Suppose that $G$ is a FAb compact $p$-adic analytic group. In this case, as we will see, the representation zeta function is defined 
and extends to a meromorphic function on $\C$. It is not true in general that $\zeta_{G}(s)$ is a rational function
in $p^{-s}$, but this is not too far from the truth. Any formal Dirichlet series with coefficients in $\Z$ is an element of the ring 
$\Z\llbracket p_1^{-s},p_2^{-s},\dots\rrbracket$, where $p_1,p_2,\dots$ are the primes in $\N$, via 
$\sum_{n}a_n n^{-s} \mapsto \sum_n a_n p_1^{-se_1}p_2^{-se_2}\cdots$, where $p_1^{e_1}p_2^{e_2}\cdots$ is the prime factorisation of $n$.
We say that a Dirichlet series (with integer coefficients) is \emph{virtually rational in
$p^{-s}$} if, as an element of $\Z\llbracket p_1^{-s},p_2^{-s},\dots\rrbracket$, it is of the form
\begin{equation}\label{eqn:virtually-rational}
\sum_{i=1}^{k}n_{i}^{-s}f_{i}(p^{-s}),
\end{equation}
for some natural numbers $k$ and $n_{i}$ and rational functions $f_{i}(t)\in\Q(t)$.

If $Z_G(s)$ defines a zeta function $\zeta_{G}(s)$, we say that $\zeta_{G}(s)$ is virtually rational in $p^{-s}$ if $Z_G(s)$ is, or equivalently, if $\zeta_{G}(s)$ is of the form \eqref{eqn:virtually-rational}, for all $s$ in some right half-plane of $\C$.
In \cite{Jaikin-zeta}, Jaikin-Zapirain proved one of the first and
most fundamental results on representation zeta functions, namely,
that when $p>2$ the zeta function $\zeta_{G}(s)$ is virtually rational in $p^{-s}$ 
and if $G$ is pro-$p$, then $\zeta_{G}(s)$ is rational in $p^{-s}$. 
Moreover, he conjectured that the results hold also for $p=2$, and proved this in the special case when $G$ is uniform pro-$2$. Recall that a pro-$p$ group is called \emph{uniform} (or \emph{uniformly powerful}) if it is finitely generated, torsion free and $[G,G]\leq G^p$ if $p>2$ and $[G,G]\leq G^4$ if $p=2$.
 These results may be compared with analogous (virtual) rationality results proved
earlier by du~Sautoy for subgroup zeta functions (see \cite{duSautoy-rationality}).
Both Jaikin-Zapirain and du~Sautoy rely on a rationality result for
definable integrals, due to Denef and van~den~Dries \cite{Denef-vandenDries}. We will instead use a result of Cluckers (see Section~\ref{subsubsec:Cluckers-rationality}), which differs from that used by Jaikin-Zapirain and du~Sautoy insofar as it, in addition to definable sets, also allows the use of a definable family of equivalence relations.

In \cite{Avni-abscissa}, Avni used Jaikin-Zapirain's virtual rationality theorem as an ingredient to prove that the abscissa of convergence of representation zeta functions of certain arithmetic groups is rational. Jaikin-Zapirain's result has also been fundamental for 
(albeit not always a logical prerequisite of)
work of Larsen and Lubotzky \cite{Larsen-Lubotzky}, Aizenbud and Avni \cite{Aizenbud-Avni-2016}, of Avni, Klopsch, Onn and Voll, e.g., \cite{AKOV-Duke, AKOV-GAFA}, of Budur \cite{bud2021rational}, of Kionke and Klopsch \cite{kioklo2019admissible}, and Zordan \cite{zor2016adjoint,zor2022univariate}.

\subsection{Main result and consequences}
The goal of the present paper is to give a uniform proof of the (virtual) rationality of $\zeta_G(s)$ for $G$ FAb compact $p$-adic analytic and in particular to prove Jaikin-Zapirain's conjecture mentioned above. 
Our main result is:
\begin{thm}
\label{thm:Main}Let $G$ be a FAb compact $p$-adic analytic group.
Then $\zeta_{G}(s)$ is virtually rational in $p^{-s}$. If in addition
$G$ is pro-$p$, then $\zeta_{G}(s)$ is rational in $p^{-s}$.
\end{thm}
This theorem has the following consequences.
\begin{cor}
\label{cor:Main}
	Let $G$ be a FAb compact $p$-adic analytic group. Then the following holds regarding $\zeta_{G}(s)$:
	\begin{enumerate}
		\item it extends meromorphically to the whole complex plane,
		\item it has an abscissa of convergence which is a rational number,
		\item it has no poles at negative integers and $\zeta_{G}(-2)=0$.
	\end{enumerate}
\end{cor}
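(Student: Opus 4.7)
The plan is to read off all three assertions from the explicit form of $\zeta_G(s)$ given by Theorem~\ref{thm:Main}: by that theorem we have
\[
\zeta_G(s)\;=\;\sum_{i=1}^{k} n_i^{-s}\,f_i(p^{-s})
\]
with $n_i\in\N$ and $f_i(t)\in\Q(t)$. For \emph{(i)} I would observe that each summand extends to a meromorphic function on $\C$: the factor $s\mapsto n_i^{-s}=e^{-s\log n_i}$ is entire, and $s\mapsto f_i(p^{-s})$ is the composition of the entire function $s\mapsto p^{-s}$ with a rational function, hence meromorphic. A finite sum of meromorphic functions on $\C$ is meromorphic, yielding the required continuation.

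For \emph{(ii)}, the Dirichlet coefficients $r_n(G)$ are non-negative, so Landau's theorem guarantees that the abscissa of convergence $\alpha$ is itself a singular point of the meromorphic continuation constructed in (i), and therefore a pole. To conclude that $\alpha\in\Q$ I would revisit the proof of Theorem~\ref{thm:Main} and exploit the fact that the rational functions $f_i$ it produces have denominators which are products of factors of the shape $1-p^{a_j}t^{b_j}$ with $a_j\in\Z$ and $b_j\in\Z_{>0}$; this form is automatic from the Cluckers-type rationality result for definable analytic integrals underpinning the proof. Such factors contribute real poles of $f_i(p^{-s})$ only at the rational points $s=a_j/b_j$, and $\alpha$ must be one of them.

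For \emph{(iii)}, the same structural input, combined with the observation that the underlying integrals are over compact sets (so that the $a_j$ can be taken positive, corresponding to convergent geometric series in $p^{-a_j}p^{-b_js}$), shows that every real pole of $\zeta_G(s)$ lies in $\Q_{>0}$; in particular $\zeta_G(s)$ is holomorphic at each negative integer. The vanishing $\zeta_G(-2)=0$ is the subtlest point and heuristically reflects the finite-group identity $\sum_{\rho}\rho(1)^2=|G|$ extended by meromorphic continuation to the infinite case. To prove it I would trace through the proof of Theorem~\ref{thm:Main} and isolate, in the numerators of the $f_i$, a common factor of the form $1-p^{-s-2}$ (arising from a Plancherel-type cancellation across the contributions to the different summands), which manifestly vanishes at $s=-2$. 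Establishing the presence and uniformity of such a factor across the decomposition, in a way that survives both the Clifford-theoretic reductions and the integration, is the step I expect to be the main obstacle of the argument.
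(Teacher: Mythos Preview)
Your treatment of parts \emph{(i)} and \emph{(ii)} matches the paper's. For \emph{(i)} the paper simply observes that a virtually rational expression in $p^{-s}$ is visibly meromorphic; for \emph{(ii)} it invokes the specific denominator shape coming from Theorem~\ref{thm:rational_series} (factors $1-p^{i}t^{j}$ with $j>0$) together with divergence at $s=0$, which is what you do with Landau's theorem made explicit.

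For part \emph{(iii)}, however, there is a genuine gap, and the paper takes a completely different route. The paper does \emph{not} attempt to extract either the absence of poles at negative integers or the vanishing at $-2$ from the explicit form of the $f_i$. Instead it cites \cite[Theorem~1]{JKGS-zero-at-2}, which establishes both statements for any $G$ whose representation zeta function is virtually rational in $p^{-s}$; Theorem~\ref{thm:Main} then supplies that hypothesis for all $p$.

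Your proposed direct argument has two concrete problems. First, the assertion that ``the $a_j$ can be taken positive'' is not supported by the rationality result used here: Theorem~\ref{thm:rational_series} only guarantees denominators of the form $\prod(1-p^{i}t^{j})$ with $j>0$, placing no sign restriction on $i$. So you cannot conclude from the denominator structure alone that real poles lie in $\Q_{>0}$. Second, your plan for $\zeta_G(-2)=0$ --- locating a universal numerator factor $1-p^{-s-2}$ surviving the Clifford-theoretic decomposition and the definable integration --- is exactly the hard content of the cited external result, and you correctly flag it as the main obstacle; it is not something one can read off from the proof of Theorem~\ref{thm:Main}. The honest resolution is to invoke \cite{JKGS-zero-at-2} as the paper does.
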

Here \emph{i)} and \emph{iii)} were previously known consequences of Jaikin-Zapirain's results when $p\neq 2$ or $G$ is uniform pro-$2$, while \emph{ii)} follows from Jaikin-Zapirain's results for all $p$, since for $p=2$ the abscissa of $G$ is the same as for any finite index uniform pro-$2$ subgroup.
In general, \emph{i)} follows immediately from Theorem~\ref{thm:Main}, because any virtually rational function in $p^{-s}$ is clearly meromorphic in all of $\C$. Statement \emph{ii)} follows from the model theoretic rationality result we use (Theorem~\ref{thm:rational_series}), which implies that each rational function appearing in the expression for $\zeta_{G}(s)$ has denominator that is a product of factors of the form $1-p^{i-sj}$, for integers $i,j$ with $j>0$. Moreover, the series $Z_G(s)$ diverges at $s=0$ (since $G$ is an infinite profinite group, hence possesses infinitely many non-equivalent irreducible representations). Thus the abscissa of $\zeta_{G}(s)$ is finite and equals $i/j$, for some $i,j$ as above (note that it does not necessarily equal $\max\{i/j\}$, because some denominators may cancel).
Part \emph{iii)} of Corollary~\ref{cor:Main} was proved in \cite[Theorem~1]{JKGS-zero-at-2}
for all $G$ for which virtual rationality of $\zeta_{G}(s)$ holds. Theorem~\ref{thm:Main} therefore implies that it holds for all $p$.

\subsection{Outline of the paper and the proofs}
In Section~\ref{sec:Basics from model theory} we give the basic definitions and results from model theory that we will use in this paper. 
Similarly, Section~\ref{sec:Prel on proj reps} provides the definitions and results from the theory of projective representations and projective characters, as well as related Clifford theory and group cohomology, that we need in later parts of the paper. 
The remaining sections are devoted to proving our main result. 

Our proof of Theorem~\ref{thm:Main} has the following main features: 
\begin{itemize}
	\item[--] a new argument (i.e., different from the one in \cite[Section~5]{Jaikin-zeta}) for the main part of the proof, namely 
	the rationality of the `partial' zeta series (see below), making systematic use of projective representations and associated cohomology classes; 
	\item[--]
	avoiding the use of Lie algebras and the Kirillov orbit method (which were essential in \cite[Section~5]{Jaikin-zeta}). This is necessary if one hopes to find an analogous proof for $\F_{p}\llbracket t \rrbracket$-analytic groups (see Section~\ref{sec:pos_char}).
\end{itemize}
We now describe the main ideas of the proof in more detail, and point out how it relates to and differs from Jaikin-Zapirain's proof for $p>2$ and the approach in \cite{hrumar2015definable}. The first step is to reduce the (virtual) rationality of $\zeta_{G}(s)$ to the rationality in $p^{-s}$ of the partial zeta series
\[
\zetapartial=\sum_{\theta\in\Irr_{K}^{c}(N)}\theta(1)^{-s},
\]
where $N$ a fixed open normal uniform subgroup of $G$, $K$ is a subgroup of $G$ containing $N$, and  $\Irr_{K}^{c}(N)$ denotes the set of irreducible characters of $N$ with stabiliser $K$ which determine the cohomology class $c$ in the Schur multiplier $\coho{2}(K_p/N)$, where $K_p$ is a pro-$p$ Sylow subgroup of $K$. This reduction step follows \cite[Sections~5-6]{Jaikin-zeta} and uses Clifford theory, together with a result which shows that we can replace $\coho{2}(K/N)$ by $\coho{2}(K_p/N)$ (see Section~\ref{sec:red_partial}).

In Sections~\ref{sec:red_deg_one}-\ref{sec:proof_main} we prove the rationality of the partial 
zeta series and hence Theorem~\ref{thm:Main}. To do this, we show that enumerating characters in 
$\Irr_{K}^{c}(N)$ of given degrees is equivalent to enumerating the classes of a family 
of equivalence relations that is definable with respect 
to an analytic language $\Lan$ of $p$-adic numbers (see Section~\ref{subsubsec:anlang}).
The rationality then follows from a result of Cluckers (see Theorem~\ref{thm:rational_series}). The possibility of using a definable equivalence relation, 
as in \cite{hrumar2015definable},
gives an added flexibility not present in the definability results in \cite{Jaikin-zeta}. We note that in contrast to \cite{hrumar2015definable}, which works with an extended language of rings, we need a $p$-adic analytic language because of the analytic structure of $G$ and du~Sautoy's parametrisation of subgroups via bases, which is one of our key ingredients.

We now describe the contents of Sections~\ref{sec:red_deg_one}-\ref{sec:proof_main} in more detail. In Section~\ref{sec:red_deg_one}, we use some of the theory of projective representations to show that the cohomology class in $\coho{2}(K_p/N)$ associated with a character triple $(K_p,N,\theta)$, for $K_p$ a pro-$p$ \mbox{Sylow} subgroup of $K$, can be obtained from a character triple $(N,N\cap H,\chi)$, where $H$ is an open subgroup of $G$ such that $K_p=HN$, and $\chi$ is of degree one (see Proposition~\ref{prop:Linearisation}). This is a key step, because, just like in \cite{hrumar2015definable}, we can only talk about degree one characters in definability statements. We also introduce a set $X_K$ of pairs $(H,\chi)$, which, modulo a suitable equivalence relation, parametrises the elements in $\Irr(N)$ whose stabiliser is $K$, and a function $\cC:X_K\rightarrow \coho{2}(K_p/N)$ whose fibres parametrise the sets $\Irr_{K}^{c}(N)$, modulo the relation. We then show that these fibres are expressible by a first order formula involving the values of $\chi$, $2$-cocycles and $2$-coboundaries (see Lemma~\ref{lem:first_o_formula_cohomology}). 
The approach in Section~\ref{sec:red_deg_one} is new, compared to \cite{Jaikin-zeta}, and avoids Lie algebra methods by exploiting the monomiality, for projective representations, of $K_p$.

In Section~\ref{sec:proof_main} we use the results of Section~\ref{sec:red_deg_one} to prove that the fibres of $\cC$ and the required equivalence relation are definable in the structure $\man$ of $p$-adic numbers, with respect to the language $\Lan$. 
Among other things, we exploit the known fact about Schur multipliers that every element in $\coho{2}(K_p/N)$ has a cocycle representative of $p$-power order and that we can also choose our coboundaries to have $p$-power order. This implies that we can consider our cocycles and coboundaries as functions with values in $\Q_p/\Z_p$, and hence ultimately as elements of definable sets. 
Once the definability of the fibres of $\cC$, modulo the equivalence relation, is established, an application of Cluckers's rationality result mentioned above finishes the proof of Theorem~\ref{thm:Main}.

In the proof of the definability of the fibres of $\cC$ and the equivalence relation, we adapt some ideas in \cite{hrumar2015definable} to the setting of $p$-adic analytic pro-$p$ groups. The main idea here is that the irreducible characters of $N$ are induced from degree one characters of finite index subgroups, and thus that $\Irr(N)$ can be parametrised (in a many-to-one way) by certain pairs $(H,\chi)$ where $H\leq N$ and $\chi\in\Irr(H)$. Modulo a suitable definable equivalence relation, the parametrisation is bijective and this approach is the reason why the Kirillov orbit method can be avoided. The main new contribution in Section~\ref{sec:proof_main} compared to \cite{hrumar2015definable}, is the definability of the condition for a representation corresponding to a pair $(H,\chi)$ to map to a given $c\in \coho{2}(K_p/N)$ under $\cC$ (note that all of Section~\ref{sec:red_deg_one} is needed for this purpose).

\subsection{Remarks on the positive characteristic case}
\label{sec:pos_char}

It is a natural question to ask whether FAb finitely generated compact $\F_{p}\llbracket t \rrbracket$-analytic groups have virtually rational representation zeta functions. 
This is known for $\mathrm{SL}_2(\F_{q}\llbracket t \rrbracket)$ with $q$ a power of an odd prime (see \cite[Theorem~7.5]{Jaikin-zeta}) and was asked more generally by Larsen and Lubotzky for groups which are $\F_{p}\llbracket t \rrbracket$-points of certain group schemes (see \cite[Problem~6.2]{Larsen-Lubotzky}).

Our proof of Theorem~\ref{thm:Main} 
is a first step towards a solution to this problem, as it avoids the Kirillov orbit method (which is unavailable in characteristic $p$) and Lie algebras (which are often less effective or inadequate in characteristic $p$). 
Moreover, the model theoretic rationality result of Cluckers which we use has a version for uniformly definable equivalence classes over local fields of characteristic $p$, for large enough $p$ (see Nguyen \cite{ngu2016uniform}).
On the other hand, an essential ingredient in our proof of Theorem~\ref{thm:Main} is du~Sautoy's parametrisation of finite index subgroups of $G$, which only works in characteristic $0$. To go further, one will have to narrow down the set of those subgroups of a pro-$p$ group from which all irreducible characters can be obtained by induction of linear characters.
\section{Basics from model theory}
\label{sec:Basics from model theory}
In this section we introduce the necessary tools and notation from model theory.
We refer the interested reader to the first chapters of \cite{mar2006model} and \cite{tenzie2012model} 
for further details. In particular, we refer to 
\cite[Section~1.1]{tenzie2012model} for the concepts of (many-sorted) languages, structures, and definability (with or without parameters). 
By convention, for us a {\em definable set} will always mean an $\emptyset$-definable set (which is also 
called $0$-definable set in \cite[Section~1.1]{tenzie2012model}).\par
In the present paper
we will use languages that model the field $\Q_p$ and 
$p$-adic analytic groups. Preceding authors have used a number of 
languages for these two objects; we review and compare those that are relevant for us.

\subsection{The language of $p$-adic analytic groups}
Let $\N_0$ denote the set of non-negative integers.
It will often be necessary for us to show definability in structures whose underlying set is a pro-$p$ group. 
In these situations, following \cite{duSautoy-rationality}, we will use a language whose constants, 
functions and relations closely resemble the those in a normal pro-$p$ subgroup $N$ inside a 
$p$-adic analytic group $G$.
The following definition is our version of \cite[Definition~1.13]{duSautoy-rationality}.
	\begin{defn}
		\label{def:L_G} Let $N$ be a normal pro-$p$ subgroup of a $p$-adic analytic group $G$.
		The language $\lan_N$ has two sorts $s_1$ (also called the {\em group sort}) and $s_2$, 
		constant symbols in the sort $s_1$ for each element of $N$, and a binary relation 
		symbol $x\mid y$ of sort $(s_1, s_1)$. We have the following function symbols, which all have 
		target sort $s_1$:
			\begin{enumerate}
				\item a binary function symbol $x.y$ of source type $(s_1, s_1)$;
				\item a unary function symbol $x^{-1}$ of source type $s_1$;
				\item a binary function symbol $x^{\lambda}$ of source type $(s_1, s_2)$;
				\item \label{def:L_G:aut} 
				 for each, $g\in G$, a unary function symbol $\varphi_g$ of source type $s_1$.
			\end{enumerate}
	\end{defn}
We define an $\lan_N$-structure $\struc_N$ with underlying set $N$ for the first sort and $\Z_p$ for the second sort. 
The interpretation of the symbols in $\lan_N$ as operations in the group $N$ is immediately suggested by the 
notation, with the exception of $x\mid y$ and the functions $\varphi_g$. The latter is interpreted as the conjugation 
function $N\rightarrow N$, $x\mapsto gxg^{-1}$. In order to interpret $x\mid y$ we will use that $N$ is a pro-$p$ group.
Recall that the lower $p$-series of a pro-$p$ group $H$ is defined as $H_{1}\geq H_{2}\geq\cdots$, where
	\[
	H_{1} = H,\quad\text{and}\quad H_{i+1} = \overline{H_{i}^{p}[H_{i},H]},
	\]
where $\overline{H_{i}^{p}[H_{i},H]}$ denotes the closure of $H_{i}^{p}[H_{i},H]$ as a topological subgroup of $H$. 
We interpret $x\mid y$ as the relation $\omega(x) \geq \omega(y)$, where $\omega$
is as follows.
	\begin{defn}[{\cite[Definition~1.12]{duSautoy-rationality}}] \label{def:omega}
		Define $\omega: N\rightarrow \N \cup \lbrace \infty \rbrace$ by $\omega(g) = n$ if 
		$g\in N_n\setminus N_{n+1}$ and $\omega(1) = \infty$.
	\end{defn}
\subsection{The analytic language of the $p$-adic numbers}\label{subsubsec:anlang} We recall the definition of the language 
used in \cite[Appendix~A]{hrumar2015definable}. 
	\begin{defn}
		The language $\Lan$ is a three-sorted language with a valued field sort VF, a  
		value group sort VG and a residue field sort RF. We have all constants, functions and relations of 
		$\lan_{\mathrm{ring}} = \lbrace +,-, \cdot, 0, 1 \rbrace$ for the valued field and the residue field sort, and all constants, functions 
		and relations of $\lan_{\mathrm{oag}} = \lbrace  +, <, 0 \rbrace$ for the value group sort. In addition, $\Lan$ contains
		\begin{enumerate}
			\item for $m\geq 0$, a function symbol $f$ with source type $\mathrm{VF}^m$ and target sort VF
				for each convergent power series in $m$ variables with coefficients in 
				$\Z_p$;
			\item a function symbol $\mathrm{ord}$ with source type VF and target sort VG;
			\item a function symbol $\overline{\mathrm{ac}}$ with source type VF and target sort RF.
		\end{enumerate}
	\end{defn}
We define an $\Lan$-structure $\man$ 
with underlying sets $\Q_p$ for the valued field sort, $\Z \cup \lbrace -\infty \rbrace$
for the value group, and $\Fp$ (the field with $p$ elements) for the residue field sort. The constants,
functions and relations of $\lan_{\mathrm{ring}}$ and $\lan_{\mathrm{oag}}$ are interpreted in the usual way. The functions $f$ are interpreted as \emph{restricted} analytic functions 
defined by the power series they correspond to: 
	\begin{align*}
        f^{\man}:   \Q_p^m       &\longrightarrow	\Q_p,	&
					\tuple{X}	 &\longmapsto	    {\begin{cases}
														\sum_{\tuple{i} \in \N_0^m} a_{\tuple{i}} X_1^{i_1}\cdots X_m^{i_m}     
														                        &\text{if } \tuple{X} \in \Z_p^m\\
														0				        &\text{otherwise}.
												    \end{cases}}							    
	\end{align*}
The function symbol $\mathrm{ord}$ is interpreted as the valuation map on $\Q_p$ (the valuation of $0$ is $-\infty$). Finally 
the function symbol $\overline{\mathrm{ac}}$ is interpreted as $\overline{\mathrm{ac}}^{\man}: \Q_p \rightarrow \Fp$ 
sending $0$ to $0$ and $x$ to 
	\[
		x p^{- \mathrm{ord}^{\man}(x)} \mod p.
	\]
\par
We show that there is a set of rules thorough which definable sets in $\struc_N$  
may be interpreted as definable sets in $\man$. We shall need the concept of {\em definable interpretation}, 
for which we refer the reader to \cite[Section~5.3]{hod1993model}.\par
%
Let $N$ be a uniform pro-$p$ group and let $n_1,\dots, n_d$ be a minimal set of topological generators for 
$N$. By \cite[Proposition~3.7]{DdSMS}, $N$ is in bijection with $\Z_p^d$ 
via the map $(\lambda_1,\dots, \lambda_d)\mapsto n_1^{\lambda_1}\cdots n_d^{\lambda_d}$. If $g\in N$ is such that 
$ g = n_1^{\lambda_1}\cdots n_d^{\lambda_d}$ for some $\lambda_1, \dots, \lambda_d \in \Z_p$ we say that
$(\lambda_1,\dots, \lambda_d)$ are its $\Z_p$-coordinates (with respect to $n_1, \dots, n_d$).\par

	\begin{lem}
	\label{lem:int_M_N}
		Suppose that $N$ is a uniform normal pro-$p$ group of a compact $p$-adic analytic group $G$. 
		Then $\struc_N$ is definably interpreted in $\man$.
	\end{lem}
		\begin{proof}
		Let $\ldan$ be the analytic language  in 
		\cite[Definition~1.6]{duSautoy-rationality}, \cite[Section~0.6]{Denef-vandenDries} and let $\mdan$
		be the corresponding structure with underlying set $\Z_p$ and the interpretations in 
		\cite[Definition~1.5]{duSautoy-rationality}. 
		Theorem~1.18 and Lemma~1.19 in \cite{duSautoy-rationality} give the definable interpretation of 
		$\struc_N$ in $\mdan$. It is easy to construe the function symbol $D$ of $\ldan$ as a definable function in $\man$, 
		so that the structure $\mdan$ is definably interpreted in $\man$.
		\end{proof}
\subsection{Rationality and definable enumerations in $\man$}\label{subsubsec:Cluckers-rationality}
The following definition is taken from \cite{hrumar2015definable} (Section~6, before Theorem~6.1).

	\begin{defn}
	\label{def:def_family}
		Let $d \in \N$.
		A \emph{definable family} of subsets of $\Q_p^d$ is a definable 
		subset of $\Q_p^d \times \Z$ in $\man$. If $X\subseteq \Q_p^d \times \Z$ 
		is a definable family, we define $X_\ell$ to be the fibre above 
		$\ell \in \Z$ of the projection map $X \to \Z$.
	\end{defn}		
	\begin{defn}
	\label{def:def_family_rel}
		A \emph{definable family of equivalence relations} on a definable family $X$ 
		is an equivalence relation $E$ on $X$ such that if $(x,y) \in E$, then there is an $\ell \in \Z$ such that 
		$x,y \in X_\ell$. This gives a definable equivalence relation on $X_\ell$ for each $\ell \in \Z$, namely $E_\ell = 
		E \cap (X_\ell \times X_\ell)$.
	\end{defn}
	Notice that, since $\N_0$ is a definable subset of $\Z$ in $\man$, we may replace $\Z$ with $\N_0$ in 
	the two definitions above.

	\begin{thm}[{\cite[Theorem~A.2]{hrumar2015definable}}]
	\label{thm:rational_series}
		Let $d \in \N$. Let $E$ be a definable family of equivalence relations in $\man$ on a definable
		family $X\subseteq \Q_p^d \times \N_0$. 
		Suppose that for each $n \in \N_0$ the quotient $X_n/E_n$ is finite, say, of 
		size $a_n$. Then the Poincar\'e series
			\[
				\sum_{n \in \N_0} a_n t^n
			\]
		is a rational power series in $t$ over $\Q$ whose denominator is a product of factors of the 
		form $(1 - p^i t^j)$ for some integers $i,j$ with $j > 0$.
	\end{thm}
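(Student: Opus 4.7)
The plan is to reduce to rationality of the Poincar\'e series of a definable family of \emph{finite} sets---a classical result in $\man$---by first producing a uniform definable selection of representatives for the equivalence classes. The main tools I would use are $p$-adic analytic cell decomposition for the language $\Lan$ and the availability of definable Skolem functions in $\man$; both exploit features special to the $p$-adic setting and are not available, for example, in the real analytic world.

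\emph{Step 1 (Uniform definable section).} Starting from the defining relation $F$ on $(X\times X)\times Y$ of the family $\{\eqrel_n\}$, I would apply cell decomposition in $\man$ to partition $X\times Y$, uniformly in the parameter $n$, into cells on which the fibers of $\eqrel_n$ admit a transparent description (so that on each cell either all points lie in the same $\eqrel_n$-class, or the classes are parametrised by cell coordinates in a controlled way). Within each cell, definable Skolem functions allow me to pick a canonical representative of each class, for instance the lexicographically minimal point in the coordinates $(\mathrm{ord}(x_i), \overline{\mathrm{ac}}(x_i))$. Assembling these selections yields a definable set $R\subseteq X\times Y$ whose fiber $R_n$ meets every $\eqrel_n$-class in exactly one point, so that $|R_n|=a_n$.

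\emph{Step 2 (Rationality of the reduced Poincar\'e series).} With $R$ in hand, $\sum_n a_n t^n$ is the Poincar\'e series of the definable family $\{R_n\}_{n\in\N_0}$ of finite sets in $\man$, and its rationality with denominator a product of factors $(1-p^i t^j)$ is Denef's rationality theorem in the analytic form extended to $\Lan$. The explicit cell-wise computation produces, for each cell, a geometric series in $t$ with ratio of the form $p^i t^j$: the exponent $i$ comes from the Haar volumes of $p$-adic balls appearing in the counting, and $j$ from the affine dependence of the cell data on the parameter $n$.

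\emph{Main obstacle.} The main difficulty lies in Step 1, namely making the choice of representatives definable in $\Lan$ uniformly in $n$. The equivalence classes can be finite or infinite and their shape may vary delicately with $n$, while the restricted analytic function symbols of $\Lan$ are more rigid than arbitrary analytic functions, so the Skolem construction must be carried out entirely using cell-decomposition data and the sort maps $\mathrm{ord}$ and $\overline{\mathrm{ac}}$. A possible alternative, avoiding an explicit section, is to realise $a_n$ as a parametric $p$-adic integral---integrating the indicator of a ``canonical form'' predicate against Haar measure---and invoke rationality of parametric $p$-adic analytic integrals; this, however, trades the difficulty of definable selection for that of handling a quotient measure that mixes equivalence classes of positive and zero Haar measure.
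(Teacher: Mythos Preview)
The paper does not give an independent proof of this theorem: it simply invokes \cite[Theorem~A.2]{hrumar2015definable} (Cluckers' appendix), noting only that one should take $Y=\{n\in\N_0\mid X_n\neq\emptyset\}$ rather than all of $\N_0$, and observing that this $Y$ is definable as a projection. So there is no substantive argument in the paper to compare your sketch against; any genuine comparison would have to be with Cluckers' original argument, not with this paper.

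That said, your outline is a reasonable strategy and is broadly in the spirit of how such results are proved in the $p$-adic analytic setting: reduce the count of equivalence classes to the count of points in a definable family of finite sets, then invoke the Denef--van~den~Dries style rationality theorem for $\man$. The existence of definable Skolem functions in the $p$-adic subanalytic structure is indeed known and is what makes Step~1 feasible. One caveat: your description of the selection as ``lexicographically minimal in $(\mathrm{ord}(x_i),\overline{\mathrm{ac}}(x_i))$'' is only heuristic---those invariants do not separate points of $\Q_p$, so the actual construction of a definable section has to go through the full cell decomposition (iteratively choosing centres and radii), not a single coarse ordering. This is exactly the ``main obstacle'' you identified, and you are right that it is the crux; but it is a known theorem, not an open problem, so in a write-up you should cite the relevant Skolem-function result for $\man$ rather than present it as something to be worked out.

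Your alternative via parametric integrals is also viable and is closer to how some authors phrase these arguments; the ``mixing of classes of positive and zero Haar measure'' issue you flag is real but is again handled by cell decomposition, which stratifies by dimension.
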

		\begin{proof}
			The proof is the same as the one at the end of Appendix~A in \cite{hrumar2015definable}. 
			The only difference is that instead of 
			setting $Y$ to be the set of non-negative integers, we set 
			$Y =  \lbrace n \in \N_0 \mid X_n \neq \emptyset\rbrace$. This set 
			is definable in $\man$ because it is the projection of $X$ on the $\Z$-component of $\Q_p^d \times \Z$. 
			Thus $Y$ is definable because $X$ is. The rest of the proof remains unchanged.
		\end{proof}
\section{Preliminaries on projective representations}
\label{sec:Prel on proj reps}
The main representation theoretic steps of our proof (Section~\ref{sec:red_deg_one})
will use projective representations and projective characters of (pro-)finite
groups. In this section, we collect the definitions and results that
we will need. We use \cite{Isaacs}, \cite{Karpilovsky2} and \cite{Karpilovsky3}
as sources for this theory (precise references for the non-trivial results are given below).

In the following, we regard any $\GL_{n}(\C)$ with its discrete topology.  All the definitions and results in this section apply to finite groups, regarded as discrete profinite groups. In fact, the results are trivial generalisations from the finite groups case because we consider only continuous representations and finite index subgroups. We will however need to apply the results to infinite profinite groups.

From now on, we will consider only continuous representations and their characters. Let $G$ be a profinite group and $N$ an open normal subgroup. We define $\Irr(G)$ to be the set of characters of continuous irreducible complex representations of $G$. 
For any subgroup $K\leq G$ and $\theta\in\Irr(K)$, we denote by $\Irr(G\mid\theta)$ 
the set of irreducible characters of $G$ whose restriction
to $K$ contains $\theta$. The elements of $\Irr(G\mid\theta)$ are said to \emph{lie above} or to \emph{contain} $\theta$. 

For any $K\leq G$, we write 
\[
\Irr_{K}(N)=\{\theta\in\Irr(N)\mid\Stab_{G}(\theta)=K\}
\]
for the irreducible characters of $N$ whose stabiliser under the
conjugation action of $G$ is precisely $K$.

We call $(K,N,\theta)$ a \emph{character triple} if $\theta\in\Irr(N)$ and $K$ fixes $\theta$, that is, if $K\leq\Stab_G(\theta)$. Thus $\Irr_G(N)$ is the set of character triples $(G,N,\theta)$.

A \emph{projective representation} of $G$ is a continuous function
$\rho:G\rightarrow\GL_{n}(\C)$, such that there exists a continuous
function $\alpha:G\times G\rightarrow\C^{\times}$ satisfying
\[
\rho(g)\rho(h)=\rho(gh)\alpha(g,h)\qquad\text{for all }g,h\in G.
\]
The function $\alpha$ is called the \emph{factor set} of $\rho$.
The \emph{projective character} of $\rho$ is the function $G\rightarrow\C$
given by $g\mapsto\tr(\rho(g))$.

 Just like for finite groups, one shows that the factor sets on $G\times G$ are precisely the elements in the group $\cocy{2}(G):=\cocy{2}(G,\C^{\times})$ of continuous $2$-cocycles with values in $\C^{\times}$ (see \cite[(11.6)]{Isaacs}). Moreover, we have the subgroup $\cobo{2}(G):=\cobo{2}(G,\C^{\times})$ of $2$-coboundaries and the cohomology group $\coho{2}(G)=\cocy{2}(G)/\cobo{2}(G)$, the \emph{Schur multiplier} of $G$. It is well known that the Schur multiplier of a finite group is finite (see \cite[(11.15)]{Isaacs}).
 
 Two projective representations $\rho$ and $\sigma$ are said to be \emph{similar} if there exists a $T\in\GL_{n}(\C)$ such that $\rho(g)=T\sigma(g)T^{-1}$, for all $g\in G$. Two projective representations have the same projective character if and only if they are similar. Note that there exists a notion of equivalent projective representations which we will not use.

Projective representations with factor set $\alpha$ naturally correspond to modules for the twisted group algebra $\C[G]^{\alpha}$ (see, e.g., \cite[Section~11]{Isaacs}). It is well known that 
this algebra is semisimple. A projective representation $\Theta$ with factor set $\alpha$ and the character it affords are called \emph{irreducible} if $\Theta$ corresponds to a simple $\C[G]^{\alpha}$-module. We let 
    \[
	\PIrr_{\alpha}(G)
    \]
denote the set of irreducible projective characters of $G$ with factor set $\alpha$.
\begin{defn}
\label{def:stron_ext}
Let $\Theta$ be an irreducible representation of $N$ fixed by $K\leq G$. We say that a projective representation $\Pi$ of $K$ \emph{strongly
extends} (or is a \emph{strong extension} of) $\Theta$ if for all $g\in K$ and $n\in N$, we have:
\begin{enumerate}
\item $\Pi(n)=\Theta(n)$,
\item $\Pi(ng)=\Pi(n)\Pi(g)$,
\item $\Pi(gn)=\Pi(g)\Pi(n)$.
\end{enumerate}
Moreover, in this situation, we say that the projective character
of $\Pi$ \emph{strongly extends} (or is a \emph{strong extension} of) the character of $\Theta$.
\end{defn}
\begin{lem}
\label{lem:factor-set-gn} Let $\Theta$ be an irreducible representation
of $N$ fixed by $K\leq G$ and let $\Pi$ be a projective representation
of $G$ with factor set $\alpha$ such that $\Pi(n)=\Theta(n)$, for
all $n\in N$. Then $\Pi$ strongly extends $\Theta$ if and only
if  $\alpha(g,n)=\alpha(n,g)=1$, for all $g\in G$ and $n\in N$.
\end{lem}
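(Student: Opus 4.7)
The proof is essentially an unwinding of the definition of the factor set, so I expect it to be very short and to contain no real obstacle. The plan is as follows.

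By the defining relation of the factor set, applied to the pairs $(g,n)$ and $(n,g)$ in $K\times K$ (or $G\times G$, depending on how one reads the statement), we have
\begin{equation*}
\Pi(g)\Pi(n)=\Pi(gn)\alpha(g,n), \qquad \Pi(n)\Pi(g)=\Pi(ng)\alpha(n,g).
\end{equation*}
Since $\alpha$ takes values in $\C^{\times}$ and the matrices $\Pi(gn),\Pi(ng)\in\GL_n(\C)$ are invertible, these two equations translate conditions \emph{ii)} and \emph{iii)} of Definition~\ref{def:stron_ext} into scalar identities. Namely, $\Pi(gn)=\Pi(g)\Pi(n)$ is equivalent to $\alpha(g,n)=1$, and $\Pi(ng)=\Pi(n)\Pi(g)$ is equivalent to $\alpha(n,g)=1$. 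Condition \emph{i)} of the definition is already imposed as a hypothesis of the lemma.

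Hence, assuming $\Pi$ strongly extends $\Theta$, the equalities $\Pi(gn)=\Pi(g)\Pi(n)$ and $\Pi(ng)=\Pi(n)\Pi(g)$ immediately force $\alpha(g,n)=\alpha(n,g)=1$ for all relevant $g$ and $n$. Conversely, if $\alpha(g,n)=\alpha(n,g)=1$ for all such $g,n$, then the displayed identities above yield \emph{ii)} and \emph{iii)}, while \emph{i)} holds by hypothesis, so $\Pi$ strongly extends $\Theta$.

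The only subtlety worth flagging is that Definition~\ref{def:stron_ext} introduces strong extensions as projective representations of $K$ (the stabiliser), whereas the statement of the lemma refers to $\Pi$ as a projective representation of $G$; the argument above is insensitive to this distinction, since one merely needs the factor set relation to hold on the pairs $(g,n)$ and $(n,g)$ with $n\in N$ and $g$ in the group on which $\Pi$ is defined.
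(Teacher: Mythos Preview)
Your proof is correct and follows essentially the same approach as the paper's own proof: both simply apply the factor set relation to the pairs $(g,n)$ and $(n,g)$ and observe that, since $\Pi(gn)$ and $\Pi(ng)$ are invertible, conditions \emph{ii)} and \emph{iii)} of Definition~\ref{def:stron_ext} are equivalent to $\alpha(g,n)=1$ and $\alpha(n,g)=1$, respectively. Your remark about $G$ versus $K$ is a fair observation about the statement, but as you note it does not affect the argument.
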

\begin{proof}
The definition of factor set gives that 
	\[
		\Pi(ng)\alpha(n,g)=\Pi(n)\Pi(g),
	\] 
so $\Pi(ng)=\Pi(n)\Pi(g)$ is equivalent to $\alpha(n,g)=1$. Similarly
$\Pi(gn)=\Pi(g)\Pi(n)$ is equivalent to $\alpha(g,n)=1$.
\end{proof}
\begin{thm}
\label{thm:Clifford-map}Let $\Theta$ be an irreducible representation
of $N$ fixed by $K\leq G$. There exists a projective representation $\Pi$
of $K$ which strongly extends $\Theta$. Let $\hat{\alpha}$ be the
factor set of $\Pi$. Then $\hat{\alpha}$ is constant on cosets in
$K/N$, so we have a well-defined element $\alpha\in \cocy{2}(K/N)$
given by
\[
\alpha(gN,hN)=\hat{\alpha}(g,h).
\]
Moreover, we have a well-defined function 
    \[
	\cC_{K}:\{\theta\in\Irr(N)\mid K\leq \Stab_G(\theta)\}\longrightarrow \coho{2}(K/N),\qquad\cC_{K}(\theta)=[\alpha].
    \]
\end{thm}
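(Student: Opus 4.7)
The plan is to prove three claims in turn: (i) existence of a strong extension $\Pi$; (ii) the factor set $\hat\alpha$ is constant on pairs of $N$-cosets and hence descends to a $2$-cocycle $\alpha$ on $K/N$; (iii) the resulting class $[\alpha]\in\coho{2}(K/N)$ depends only on $\theta$ (and not on the chosen $\Theta$ or $\Pi$). Throughout, $N$ is open in $G$, so $K/N$ is finite, which removes continuity concerns: any function on $K/N$ is automatically continuous, and $\Pi$ is continuous on $K$ because it is continuous on each $N$-coset.

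For (i), I would use the standard construction. Let $\Theta\colon N\to\GL_n(\C)$ afford $\theta$. For each $g\in K$, the conjugate representation $n\mapsto\Theta(g^{-1}ng)$ is equivalent to $\Theta$, so by Schur's Lemma there exists $M(g)\in\GL_n(\C)$, unique up to a scalar, with $M(g)\Theta(n)M(g)^{-1}=\Theta(gng^{-1})$ for all $n\in N$. Choose coset representatives $T\subseteq K$ for $K/N$ with $1\in T$, pick such an $M(t)$ for each $t\in T\setminus\{1\}$, set $M(1)=I$, and define $\Pi(tn)=M(t)\Theta(n)$ for $t\in T$, $n\in N$. A direct verification shows $\Pi(n)=\Theta(n)$, $\Pi(ng)=\Pi(n)\Pi(g)$, and $\Pi(gn)=\Pi(g)\Pi(n)$; thus $\Pi$ strongly extends $\Theta$.

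For (ii), by Lemma~\ref{lem:factor-set-gn} we have $\hat\alpha(g,n)=\hat\alpha(n,g)=1$ whenever $n\in N$. Given $g,h\in K$ and $n,m\in N$, write $nh=hn'$ with $n'=h^{-1}nh\in N$. Then
\[
\Pi(gn)\Pi(hm)=\Pi(g)\Pi(n)\Pi(h)\Pi(m)=\Pi(g)\Pi(h)\Pi(n')\Pi(m)=\hat\alpha(g,h)\,\Pi(gh)\Pi(n'm),
\]
and since $ghn'm=gnhm$ and $\Pi(gh)\Pi(n'm)=\Pi(ghn'm)$, comparing with $\Pi(gn)\Pi(hm)=\hat\alpha(gn,hm)\Pi(gnhm)$ yields $\hat\alpha(gn,hm)=\hat\alpha(g,h)$. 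Hence $\hat\alpha$ descends to a well-defined $2$-cocycle $\alpha$ on $K/N$.

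For (iii), suppose $\Pi'$ is a second strong extension of $\Theta$ with factor set $\hat\alpha'$. For each $g\in K$, the matrices $\Pi(g)$ and $\Pi'(g)$ both intertwine $\Theta$ with its $g$-conjugate, so Schur's Lemma gives $\mu(g)\in\C^{\times}$ with $\Pi(g)=\mu(g)\Pi'(g)$. Since $\Pi=\Pi'=\Theta$ on $N$ we have $\mu|_N=1$, and the identities $\Pi(gn)=\Pi(g)\Theta(n)$, $\Pi'(gn)=\Pi'(g)\Theta(n)$ show $\mu$ factors through a function $\bar\mu\colon K/N\to\C^{\times}$. Expanding $\Pi(g)\Pi(h)=\hat\alpha(g,h)\Pi(gh)$ via $\Pi=\mu\Pi'$ then gives $\alpha=\alpha'\cdot\partial\bar\mu$, so $[\alpha]=[\alpha']$ in $\coho{2}(K/N)$. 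Independence of the representative $\Theta$ of $\theta$ is immediate: any other choice is $S\Theta S^{-1}$, and $S\Pi(\cdot)S^{-1}$ is a strong extension of it with the same factor set. I expect the main obstacle to be the bookkeeping in step (ii); but the key input that $\hat\alpha$ trivialises on pairs with one entry in $N$ reduces it to the short computation displayed above.
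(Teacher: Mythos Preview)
Your proof is correct and follows the standard construction. The paper's own proof simply reduces to the finite-group case (using that $N$ is open and representations factor through finite quotients) and then cites Isaacs \cite[(11.2) and (11.7)]{Isaacs}, whereas you have written out the argument in full; the underlying approach is the same.
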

\begin{proof}
Since $N$ is open in $K$ and every representation of $N$ factors
through a finite quotient, we can reduce to the case of finite groups.
Now, the statements are contained in (the proofs of) \cite[(11.2) and (11.7)]{Isaacs}.
\end{proof}
\begin{lem}
	\label{lem:same_fs}
	Let $\theta$ be an irreducible character of $N$ fixed by $K\leq G$, let 
	$\alpha\in\cocy{2}(K/N)$ be a representative of the cohomology class $\cC_K(\theta)$ and let $\hat{\alpha}$ be the pull-back given by $\hat{\alpha}(g,h)=\alpha(gN,hN)$, for $g,h\in K$.
	Assume that $\alpha$ is trivial on $N \times N$ (i.e, not merely constant but equal to $1$). Then there exists a strong extension of $\theta$ to $K$ with factor set $\hat{\alpha}$.
\end{lem}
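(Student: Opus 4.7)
The plan is to produce $\Pi$ by rescaling the strong extension supplied by Theorem~\ref{thm:Clifford-map}. Concretely, that theorem produces a strong extension $\Pi_0$ of $\theta$ to $K$ with factor set $\hat{\beta}$, where $\beta\in\cocy{2}(K/N)$ represents $\cC_K(\theta)=[\alpha]$. Thus $\alpha=\beta\cdot\delta\mu$ for some $1$-cochain $\mu\colon K/N\to\C^{\times}$, where $\delta\mu(xN,yN)=\mu(xN)\mu(yN)\mu(xyN)^{-1}$. Writing $\hat{\mu}\colon K\to\C^{\times}$ for the pull-back $\hat{\mu}(g)=\mu(gN)$, I would set
\[
\Pi(g)\coloneqq \hat{\mu}(g)\Pi_0(g)\qquad (g\in K).
\]
A direct calculation gives $\Pi(g)\Pi(h)=\widehat{\delta\mu}(g,h)\hat{\beta}(g,h)\Pi(gh)=\hat{\alpha}(g,h)\Pi(gh)$, so $\Pi$ is a projective representation of $K$ with factor set $\hat{\alpha}$.

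Next I would check that $\Pi(n)=\theta(n)$ for $n\in N$, which reduces to $\mu(N)=1$. This is precisely where the hypothesis enters for the first time: since $\Pi_0$ is a strong extension of $\theta$, Lemma~\ref{lem:factor-set-gn} gives $\hat{\beta}(n,n')=1$ for all $n,n'\in N$, whence $\beta(N,N)=1$; combined with the hypothesis $\alpha(N,N)=1$ and the relation $\alpha=\beta\cdot\delta\mu$ evaluated at $(N,N)$, this forces $\mu(N)=\delta\mu(N,N)=1$, so $\hat{\mu}|_N\equiv 1$ and $\Pi(n)=\Pi_0(n)=\theta(n)$.

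For the strong-extension property of $\Pi$ itself, Lemma~\ref{lem:factor-set-gn} reduces the task to showing $\hat{\alpha}(g,n)=\hat{\alpha}(n,g)=1$ for all $g\in K$, $n\in N$, equivalently $\alpha(gN,N)=\alpha(N,gN)=1$ for every $gN\in K/N$. Substituting $x=y=N$ (resp.\ $y=z=N$) in the cocycle identity $\alpha(x,y)\alpha(xy,z)=\alpha(x,yz)\alpha(y,z)$ and using $\alpha(N,N)=1$ forces $\alpha(N,z)=\alpha(N,z)^2$ (resp.\ $\alpha(z,N)=\alpha(z,N)^2$), hence $\alpha(N,z)=\alpha(z,N)=1$ for all $z\in K/N$. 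I do not anticipate any serious obstacle here; the lemma is essentially a bookkeeping statement whose content is that the hypothesis $\alpha(N,N)=1$ is exactly what is needed both to normalise $\mu$ automatically and to transfer the strong-extension property through the coboundary adjustment.
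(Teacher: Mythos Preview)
Your proof is correct and follows essentially the same approach as the paper: take a strong extension with factor set $\hat{\beta}$, write $\alpha=\beta\cdot(\text{coboundary of }\mu)$, and multiply the strong extension by the pull-back $\hat{\mu}$. The only cosmetic difference is that you verify the strong-extension property by invoking Lemma~\ref{lem:factor-set-gn} after an explicit cocycle computation showing $\alpha(gN,N)=\alpha(N,gN)=1$, whereas the paper checks conditions \emph{i)}--\emph{iii)} of Definition~\ref{def:stron_ext} directly from the fact that $\hat{\mu}$ is constant on $N$-cosets and equal to $1$ on $N$.
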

\begin{proof}
	Let $\hat{\theta}$ be a strong extension of $\theta$. Let $\hat{\beta}$ be the factor set of $\hat{\theta}$ and 
	$\beta\in\cocy{2}(K/N)$ such that $\beta(gN,hN)=\hat{\beta}(g,h)$.
	By Theorem~\ref{thm:Clifford-map}, there is a $\delta\in \cobo{2}(K/N)$ such that $\alpha = \beta \delta$. 
	Pulling back to $K$, we get $\hat{\alpha}=\hat{\beta}\hat{\delta}$, where $\hat{\delta}(g,h)=\delta(gN,hN)$ for  
	$g,h \in K$. Since $\hat{\delta}$ is a
	coboundary, there is  a $\hat{\gamma}: K \to \C^{\times}$, that is constant on cosets of $N$ and such that 
	$\hat{\delta}(g,h)= \hat{\gamma}(gh)^{-1} \hat{\gamma}(g) \hat{\gamma}(h)$, for  $g,h \in K$.
	As both $\hat{\alpha}$ (by definition) and $\hat{\beta}$ 
	(by Lemma~\ref{lem:factor-set-gn}) are trivial on $N\times N$, the function $\hat{\gamma}\lvert_N$ is 
	a constant homomorphism. We conclude that $\hat{\gamma}\lvert_N = 1$ and $\hat{\gamma} \hat{\theta}$ 
	is a strong extension of $\theta$ with factor set $\alpha$.
\end{proof}

For any $H\leq G$ and factor set $\alpha\in \cocy{2}(G)$,
we denote the restriction of $\alpha$ to $H\times H$ by $\alpha_{H}$.
Suppose that $H$ is open in $G$, and let $\alpha\in \cocy{2}(G)$.
If $\chi$ is a projective character of $H$ with factor set $\alpha_{H}$, we define the \emph{induced projective character}
$\Ind_{H,\alpha}^{G}\chi$ as the character of the induced projective representation given by tensoring 
by the twisted group algebra $\C^{\alpha}[G]$ (see \cite[I, Section~9]{Karpilovsky3}). Then 
$\Ind_{H,\alpha}^{G}\chi$ is a projective character of $G$ with factor set $\alpha$. A projective 
character with trivial factor set is the character of a linear representation and in this case we omit 
the factor set, so that our notation coincides with the standard notation for induced characters of linear representations.

In Section~\ref{sec:red_deg_one} we will freely use
basic facts about projective characters which are direct analogues
of well known results for ordinary characters; for example: 
Frobenius reciprocity \cite[Ch.~1, Lemma~9.18]{Karpilovsky3}, Mackey's
intertwining number formula \cite[Ch.~1, Theorem~8.6]{Karpilovsky3},
and the fact that the inner product $\langle\chi,\chi'\rangle$ of
two projective characters, with $\chi$ irreducible, equals the multiplicity
of $\chi$ as an irreducible constituent of $\chi'$ \cite[Ch.~1, Lemma~8.10]{Karpilovsky3}.
\begin{lem}
\label{lem:projective-monomial}Let $P$ be a pro-$p$ group. Then
any projective representation is induced from a one-dimensional
projective representation of an open subgroup of $P$.
\end{lem}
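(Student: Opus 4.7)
The plan is to reduce to finite $p$-groups and then invoke the classical M-group property, with careful bookkeeping as we pass to a central extension and back. Since the twisted group algebra $\C^{\alpha}[G]$ is semisimple for any finite group $G$ and factor set $\alpha$, every projective representation decomposes into irreducible ones, so it suffices to treat an irreducible projective representation $\rho: P \to \GL_n(\C)$ and combine the summands at the end.

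First, I would reduce to finite $p$-groups. As $\GL_n(\C)$ carries the discrete topology and $\rho$ is continuous, $\ker \rho$ is open; pick an open normal subgroup $K \trianglelefteq P$ with $K \leq \ker \rho$. Since $K$ is normal, both $\rho$ and its factor set $\alpha(g,h) = \rho(g)\rho(h)\rho(gh)^{-1}$ descend to the finite $p$-group $G := P/K$, so it suffices to prove the statement for $G$; any inducing subgroup of $G$ then pulls back to an open subgroup of $P$.

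For a finite $p$-group $G$, the Schur multiplier $\coho{2}(G,\C^{\times})$ is itself a finite $p$-group, so I may replace $\alpha$ by a cohomologous representative with values in a finite cyclic $p$-subgroup $Z \subset \C^{\times}$. This yields a central extension $1 \to Z \to \tilde{G} \to G \to 1$ with $\tilde{G}$ again a finite $p$-group, and the irreducible projective representation $\Theta$ of $G$ with factor set $\alpha$ lifts to a genuine irreducible representation $\tilde{\Theta}$ of $\tilde{G}$ on which $Z$ acts via the inclusion character $\lambda: Z \hookrightarrow \C^{\times}$. Since $\tilde{G}$ is nilpotent, hence supersolvable and therefore an M-group, we may write $\tilde{\Theta} = \Ind_{\tilde{H}}^{\tilde{G}} \tilde{\chi}$ for some $\tilde{H} \leq \tilde{G}$ and one-dimensional character $\tilde{\chi}$ of $\tilde{H}$.

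The main obstacle is showing that $Z \leq \tilde{H}$; without this one cannot descend $\tilde{\chi}$ to a one-dimensional projective character of a subgroup of $G$. The argument is via Schur's lemma: if $Z \not\leq \tilde{H}$ then $Z$, although central, would permute the coset basis of the induced module non-trivially and so fail to act by a scalar on the irreducible $\tilde{\Theta}$, a contradiction. Granted $Z \leq \tilde{H}$, the restriction $\tilde{\chi}|_{Z}$ must equal $\lambda$ (it is the central character of $\tilde{\Theta}$), so $\tilde{\chi}$ descends via the subextension $1 \to Z \to \tilde{H} \to \tilde{H}/Z \to 1$ to a one-dimensional projective character $\chi$ of $H := \tilde{H}/Z \leq G$ with factor set $\alpha|_{H}$, and the induction $\tilde{\Theta} = \Ind_{\tilde{H}}^{\tilde{G}} \tilde{\chi}$ then descends to $\Theta = \Ind_{H,\alpha}^{G} \chi$, which is the desired conclusion.
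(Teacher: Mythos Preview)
Your argument is correct and amounts to unpacking the theorem the paper simply cites (Karpilovsky, \emph{Projective Representations}, Ch.~3, Theorem~11.2, on projective monomiality of supersolvable groups). The paper's proof is two lines: pass to a finite $p$-group quotient and invoke that reference. Your route---replace $\alpha$ by a cohomologous cocycle with values in a cyclic $p$-group $Z$, form the central extension $\tilde G$ (again a finite $p$-group because the Schur multiplier of a $p$-group is a $p$-group), apply the ordinary M-group property of nilpotent groups, then descend---is precisely the standard proof of that cited theorem. The Schur's-lemma step forcing $Z\le\tilde H$ is the crux and is correct: a central element not in $\tilde H$ would permute the coset basis of the induced module nontrivially, hence could not act as a scalar on the irreducible $\tilde\Theta$. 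What you gain is self-containment; what the paper gains is brevity.

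Two small wrinkles. The remark about ``combining the summands at the end'' for reducible $\rho$ is wrong: a direct sum of monomial representations induced from \emph{different} subgroups need not be induced from a single one-dimensional character (e.g.\ $\mathbf{1}\oplus\mathbf{1}$ on $\Z/p$ for $p$ odd). The lemma as stated in the paper is loose on this, but it is only ever applied to irreducibles, so this is harmless. Second, the reduction to $P/K$ is slightly glib: $K\le\rho^{-1}(I)$ alone does not make $\rho$ constant on cosets of $K$, since $\rho(gk)=\rho(g)\alpha(g,k)^{-1}$. You must also shrink $K$ so that the continuous (hence finitely-valued) factor set $\alpha$ is trivial on $P\times K$ and $K\times P$; this is routine, and the paper's own one-line reduction glosses over the same point.
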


\begin{proof}
By definition, every projective representation of $P$ factors through
a finite quotient. Since a finite $p$-group is supersolvable, the
result now follows from \cite[Ch.~3, Theorem~11.2]{Karpilovsky2}.
\end{proof}

\subsection{Projective representations and Clifford theory}
If two projective representations of a group $G$ have factor sets $\alpha$ and $\beta$, respectively, then their tensor product has factor set $\alpha \beta$. This is an immediate consequence of the definitions, but is a fact that we will use repeatedly throughout the paper.
The following two lemmas are due to Clifford \cite[Theorems~3-5]{Clifford-1937},
but are not stated in the literature in a form that is useful for us.
\begin{lem}
	\label{lem:Clifford-extensions}
	Let $(K,N,\theta)$ be a character
	triple. Let $\hat{\theta}\in\PIrr_{\hat{\alpha}}(K)$ be a strong
	extension of $\theta$, so that $\cC_{K}(\theta)=[\alpha]$. For any
	$\bar{\pi}\in\PIrr_{\alpha^{-1}}(K/N)$, let $\pi\in\PIrr_{\hat{\alpha}^{-1}}(K)$
	denote the pull-back of $\bar{\pi}$ along the map $K\rightarrow K/N$.
	Then there is a bijection $\PIrr_{\alpha^{-1}}(K/N)\rightarrow \Irr(K\mid\theta)$ given by 
	$\bar{\pi}\mapsto \hat{\theta}\pi$.
\end{lem}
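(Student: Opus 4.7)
The plan is to verify this is the classical Clifford--Gallagher correspondence adapted to strong extensions, which breaks into four verifications: well-definedness (the target really lands in $\Irr(K\mid\theta)$), injectivity, surjectivity, and a small dimension check.

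First I would note that the factor set of $\hat{\theta}\pi$ is the product $\hat{\alpha}\cdot\hat{\alpha}^{-1}=1$, so $\hat{\theta}\pi$ is an honest (linear) representation of $K$. Normalising factor sets so that $\alpha(1,1)=1$ forces $\bar{\pi}(1)=\mathrm{id}$, hence $\pi(n)=\bar{\pi}(1)=\mathrm{id}$ for every $n\in N$. Consequently $(\hat{\theta}\pi)|_{N}=\hat{\theta}|_{N}=\theta$, which is irreducible, so $\hat{\theta}\pi$ is itself irreducible and plainly lies above $\theta$. This puts $\hat{\theta}\pi$ in $\Irr(K\mid\theta)$ and also gives us the multiplicity bookkeeping: any $\sigma\in\Irr(K\mid\theta)$ on a space $U$ has $\sigma|_{N}=e\theta$ (since $K$ fixes $\theta$), with $\dim U=e\dim V$, where $V$ carries $\theta$.

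For injectivity, suppose $T\colon V\otimes W\to V\otimes W'$ intertwines $\hat{\theta}\pi$ and $\hat{\theta}\pi'$. Restricting to $N$ it intertwines $\theta\otimes\mathrm{id}_{W}$ with $\theta\otimes\mathrm{id}_{W'}$, and Schur's lemma applied to the isotypic decomposition of $\theta$ forces $T=\mathrm{id}_{V}\otimes S$ for some linear $S\colon W\to W'$. Reinserting this into the intertwining relation on all of $K$ and cancelling $\hat{\theta}(g)$ yields $S\pi(g)=\pi'(g)S$, hence an intertwiner of $\bar{\pi}$ and $\bar{\pi}'$ as projective representations of $K/N$.

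For surjectivity, I would use the standard functor: given $\sigma\in\Irr(K\mid\theta)$ on $U$, set $W=\Hom_{\C[N]}(V,U)$ and define
\[
(g\cdot T)(v)=\sigma(g)\,T\!\left(\hat{\theta}(g)^{-1}v\right),\qquad g\in K,\ T\in W,\ v\in V.
\]
A direct computation using $\hat{\theta}(gh)=\hat{\alpha}(g,h)^{-1}\hat{\theta}(g)\hat{\theta}(h)$ shows that $g\mapsto(T\mapsto g\cdot T)$ is a projective representation of $K$ with factor set $\hat{\alpha}^{-1}$. Because $T$ is $\C[N]$-linear and $\hat{\theta}|_{N}=\theta$, the element $n\in N$ acts trivially on $W$, so the action descends to a projective representation $\bar{\pi}$ of $K/N$ with factor set $\alpha^{-1}$. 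The natural evaluation map
\[
V\otimes W\longrightarrow U,\qquad v\otimes T\longmapsto T(v),
\]
is $K$-equivariant (another direct check) and is nonzero since $U|_{N}$ contains $V$; irreducibility of $U$ then makes it surjective, and the dimension count $\dim(V\otimes W)=\dim V\cdot e=\dim U$ promotes it to an isomorphism. Irreducibility of $\bar{\pi}$ is forced by the fact that any $\bar{\pi}$-invariant decomposition of $W$ would pass through the tensor product to break $U$.

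The main technical point I expect to need care with is verifying that the $K$-action on $W=\Hom_{\C[N]}(V,U)$ really yields factor set $\hat{\alpha}^{-1}$ (rather than $\hat{\alpha}$), which hinges on choosing the $\hat{\theta}(g)^{-1}$ convention on the right; once this cocycle bookkeeping is correct and $N$-triviality is in hand, the evaluation map argument and the Schur-lemma step are routine.
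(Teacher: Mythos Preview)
There is a slip in your well-definedness step. You write ``$(\hat{\theta}\pi)|_{N}=\hat{\theta}|_{N}=\theta$, which is irreducible, so $\hat{\theta}\pi$ is itself irreducible''. But if $\bar{\pi}$ has degree $e>1$, then at the representation level $\Pi|_N=I_e$ and the restriction of $\widehat{\Theta}\otimes\Pi$ to $N$ is $\Theta\otimes I_e\cong\Theta^{\oplus e}$, which is \emph{not} irreducible; so your deduction of irreducibility fails. The fix is already in your hands: your Schur argument for injectivity, specialised to $\pi=\pi'$, shows that any $T$ commuting with $\widehat{\Theta}\otimes\Pi$ has the form $I_V\otimes S$ with $S$ commuting with $\Pi$, hence scalar by irreducibility of $\bar{\pi}$; this gives irreducibility of $\widehat{\Theta}\otimes\Pi$. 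With that repaired, the restriction is $e\theta$, so $\hat{\theta}\pi\in\Irr(K\mid\theta)$ as required.

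Apart from this, your argument is correct and runs parallel to the paper's. The paper proves injectivity with exactly your Schur-on-$N$ trick, phrased in Kronecker-product coordinates ($P=Q\otimes I_d$) rather than abstract tensor language, and for well-definedness and surjectivity it simply cites Nagao--Tsushima instead of writing out the $\Hom_{\C[N]}(V,U)$ construction. Your constructive surjectivity proof via the evaluation map is the standard one underlying that reference and is more self-contained than the paper's citation; it buys you an explicit inverse to the correspondence, at the cost of the cocycle bookkeeping you flag.
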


\begin{proof}
	Since $\theta$ factors through a finite group, the statements immediately
	reduce to the case where $K$ and $N$ are finite. The fact that $\bar{\pi}\mapsto\hat{\theta}\pi$
	is a function with the given domain and codomain is proved in \cite[Theorem~5.8\,(ii)]{Nagao-Tsushima} 
	in the context of projective representations. This immediately implies the corresponding fact for projective 
	characters. The fact that it is surjective is \cite[Theorem~5.8\,(i)]{Nagao-Tsushima}.
	We prove injectivity using a simplified version of the argument in
	\cite[p.~545-546]{Clifford-1937}. Let $\Theta$ be a $K$-fixed irreducible
	representation of $N$ and let $\widehat{\Theta}$ be
	a strong extension of $\Theta$ to $K$ with factor set $\hat{\alpha}$.
	Let $\overline{\Pi},\overline{\Pi}'$ be irreducible projective representations
	of $K/N$ with factor set $\alpha^{-1}$, and let $\Pi,\Pi'$ be their
	pull-backs to $K$. Let $d=\dim\widehat{\Theta}=\dim\Theta$, $e=\dim\Pi=\dim\overline{\Pi}$
	and $e'=\dim\Pi'=\dim\overline{\Pi}'$. Assume that $\widehat{\Theta}\otimes\Pi$
	is similar to $\widehat{\Theta}\otimes\Pi'$. Then $\Pi\otimes\widehat{\Theta}$
	is also similar to $\Pi\otimes\widehat{\Theta}'$, that is, there exists
	a $P\in\GL_{de}(\C)$ such that for all $k\in K$, 
	\[
	P^{-1}(\Pi(k)\otimes\widehat{\Theta}(k))P=\Pi'(k)\otimes\widehat{\Theta}(k).
	\]
	Then, for any $n\in N$, we have $P^{-1}(\hat{\alpha}(1,1)^{-1}I_{e}\otimes\Theta(n))P=\hat{\alpha}(1,1)^{-1}I_{e}\otimes\Theta(n)$,
	and thus 
	\[
	P^{-1}(I_{e}\otimes\Theta(n))P=I_{e}\otimes\Theta(n).
	\]
	The matrix $I_{e}\otimes\Theta(n)$ is the value at $n$ of the representation
	$\Theta^{\oplus e}$, so Schur's lemma implies that $P$ is a block matrix consisting
	of $e^2$ scalar blocks of size $d\times d$, that is, $P=Q\otimes I_{d}$, for some
	$Q\in\GL_{e}(\C)$. Hence, for all $k\in K$,
	\[
	0=P^{-1}(\Pi(k)\otimes\widehat{\Theta}(k))P-\Pi'(k)\otimes\widehat{\Theta}(k)=(Q^{-1}\Pi(k)Q-\Pi'(k))\otimes\widehat{\Theta}(k).
	\]
	This implies that $\widehat{\Theta}(k)\otimes(Q^{-1}\Pi(k)Q-\Pi'(k))=0$,
	so since $\widehat{\Theta}(k)$ is non-zero, we must have $Q^{-1}\Pi(k)Q=\Pi'(k)$,
	by the definition of Kronecker product. We have thus proved that if $\widehat{\Theta}\otimes\Pi$
	has the same character as $\widehat{\Theta}\otimes\Pi'$, then $\Pi$ has the same character as $\Pi'$, and this proves the asserted injectivity.
\end{proof}

\begin{lem}
	\label{lem:Clifford-degree-ratios}Let $\theta,\theta'\in\Irr(N)$ be two characters fixed by $K$ such that $\cC_{K}(\theta)=\cC_{K}(\theta')=[\alpha]$, for some $\alpha\in\cocy{2}(K/N)$. Let  $\hat{\theta},\hat{\theta}'\in\PIrr_{\hat{\alpha}}(K)$ be strong extensions of $\theta$ and $\theta'$, respectively, where $\hat{\alpha}$ is the pull-back of $\alpha$ to $K$ (such $\hat{\theta}$ and $\hat{\theta}'$ exist thanks to Lemma~\ref{lem:same_fs}). 
Then there is a bijection  
	$\sigma:\Irr(K\mid\theta)\rightarrow \Irr(K\mid\theta')$,
	 $\hat{\theta}\pi\mapsto\hat{\theta}'\pi$,
	where $\pi$ is the pull-back of $\bar{\pi}\in\PIrr_{\alpha^{-1}}(K/N)$,
   such that
	\[
	\frac{(\hat{\theta}\pi)(1)}{\theta(1)}=\frac{\sigma(\hat{\theta}\pi)(1)}{\theta'(1)}.
	\]
\end{lem}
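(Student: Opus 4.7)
The plan is to produce $\sigma$ directly as a composition of two instances of the bijection supplied by Lemma~\ref{lem:Clifford-extensions}, and then to compute the claimed degree ratio from the fact that strong extensions preserve the degree of $\theta$.

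First I would note that because $\cC_K(\theta)=\cC_K(\theta')=[\alpha]$ and (by Lemma~\ref{lem:same_fs}) both $\theta$ and $\theta'$ admit strong extensions to $K$ whose factor set is the pulled-back cocycle $\hat{\alpha}$, the hypotheses of Lemma~\ref{lem:Clifford-extensions} are satisfied for both $\hat{\theta}$ and $\hat{\theta}'$. Applying that lemma twice yields two bijections
\[
\Phi:\PIrr_{\alpha^{-1}}(K/N)\longrightarrow \Irr(K\mid\theta),\qquad \bar{\pi}\longmapsto \hat{\theta}\pi,
\]
\[
\Phi':\PIrr_{\alpha^{-1}}(K/N)\longrightarrow \Irr(K\mid\theta'),\qquad \bar{\pi}\longmapsto \hat{\theta}'\pi,
\]
where in both cases $\pi$ denotes the pull-back of $\bar{\pi}$ to $K$. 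I would then define $\sigma \coloneqq \Phi'\circ \Phi^{-1}$; it is a bijection by construction, and sends $\hat{\theta}\pi$ to $\hat{\theta}'\pi$, as required.

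For the degree equality, I would use that the factor sets of $\hat{\theta}$ (resp.\ $\hat{\theta}'$) and $\pi$ are inverse to each other, so $\hat{\theta}\pi$ and $\hat{\theta}'\pi$ are honest (linear) characters, and their degrees are the products of the degrees of the factors. Since $\hat{\theta}$ strongly extends $\theta$ we have $\hat{\theta}(1)=\theta(1)$, and likewise $\hat{\theta}'(1)=\theta'(1)$. Moreover, as $\pi$ is the pull-back of $\bar{\pi}$, we have $\pi(1)=\bar{\pi}(1)$. Hence
\[
(\hat{\theta}\pi)(1)=\theta(1)\bar{\pi}(1),\qquad \sigma(\hat{\theta}\pi)(1)=(\hat{\theta}'\pi)(1)=\theta'(1)\bar{\pi}(1),
\]
and both sides of the claimed equality equal $\bar{\pi}(1)$.

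There is no genuine obstacle here beyond bookkeeping: the substantive content (bijectivity and compatibility with tensoring by pulled-back projective characters of $K/N$) is already packaged in Lemma~\ref{lem:Clifford-extensions}, and Lemma~\ref{lem:same_fs} guarantees that the two strong extensions can be chosen with a \emph{common} factor set $\hat{\alpha}$, which is what allows $\Phi$ and $\Phi'$ to share the same source $\PIrr_{\alpha^{-1}}(K/N)$ and hence to be composed.
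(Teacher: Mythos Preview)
Your proof is correct and takes essentially the same approach as the paper: the bijection $\sigma$ is obtained by applying Lemma~\ref{lem:Clifford-extensions} twice (once for $\hat{\theta}$, once for $\hat{\theta}'$) and composing, and the degree identity follows from $(\hat{\theta}\pi)(1)=\hat{\theta}(1)\pi(1)$ together with $\hat{\theta}(1)=\theta(1)$, $\hat{\theta}'(1)=\theta'(1)$. The paper's proof is simply a more compressed version of what you wrote.
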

\begin{proof}
	Lemma~\ref{lem:Clifford-extensions} implies that $\sigma$ is a bijection. For the statement regarding ratios of degrees, it remains to note that 
	    \[
		(\hat{\theta}\pi)(1)=\hat{\theta}(1)\pi(1) \quad\text{and}\quad(\hat{\theta}'\pi)(1)=\hat{\theta}'(1)\pi(1).
	    \]
\end{proof}

The following is a well known result from the cohomology of finite
groups. Note that we write the abelian group structure of cohomology groups multiplicatively as this will be more natural for the cohomology groups we will consider.
\begin{lem}
	\label{lem:basic-group-cohomology}Let $G$ be a finite group of order
	$m$ and let $A$ be a $G$-module. For any integer $i\geq1$, the
	following holds:
	\begin{enumerate}
		\item \label{lem:basic-group-cohomology-1} For any $x\in\coho{i}(G,A)$, we have $x^m=1$. Thus, if $\coho{i}(G,A)$
		is finite and if a prime $p$ divides $|\coho{i}(G,A)|$, then $p$
		divides $m$.
		\item If $P$ is a Sylow $p$-subgroup of $G$, then the restriction homomorphism
		$\res_{G,P}:\coho{i}(G,A)\rightarrow\coho{i}(P,A)$ restricts to an injection
		\[
		\res_{p}:\coho{i}(G,A)_{(p)}\hooklongrightarrow\coho{i}(P,A),
		\]
		where $\coho{i}(G,A)_{(p)}$ is the $p$-torsion subgroup of $\coho{i}(G,A)$.
		Thus, if $\coho{i}(P,A)\allowbreak = 1$ for all Sylow $p$-subgroups and all
		primes $p\mid m$, then $\coho{i}(G,A)=1$. 
	\end{enumerate}
\end{lem}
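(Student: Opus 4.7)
The plan is to prove both parts via the standard corestriction--restriction composition formula for group cohomology. Recall that for any subgroup $H \leq G$ of finite index, there is a corestriction map $\mathrm{cor}_{G,H} \colon \coho{i}(H,A) \to \coho{i}(G,A)$ with the property that the composite $\mathrm{cor}_{G,H} \circ \res_{G,H} \colon \coho{i}(G,A) \to \coho{i}(G,A)$ equals multiplication by $[G:H]$ (written multiplicatively here: raising to the power $[G:H]$). This is a standard fact proved, for instance, in any textbook on group cohomology.

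For part \emph{i)}, I would apply this formula with $H = 1$. Since $\coho{i}(1,A) = 1$ for every $i \geq 1$, the composite is identically trivial, yet it also equals the $m$-th power map on $\coho{i}(G,A)$; hence $x^m = 1$ for every $x \in \coho{i}(G,A)$. The consequence about primes dividing $|\coho{i}(G,A)|$ is then immediate: if $p \mid |\coho{i}(G,A)|$, then $\coho{i}(G,A)$ contains an element of order $p$, and since every element has order dividing $m$, we must have $p \mid m$.

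For part \emph{ii)}, I would apply the formula with $H = P$ a Sylow $p$-subgroup, giving that $\mathrm{cor}_{G,P} \circ \res_{G,P}$ is the $[G:P]$-th power map on $\coho{i}(G,A)$. The index $[G:P]$ is coprime to $p$, so this map is an automorphism of the $p$-torsion subgroup $\coho{i}(G,A)_{(p)}$ (since raising to a power coprime to the order of any $p$-torsion element is a bijection on the cyclic subgroup it generates, and hence on the whole $p$-torsion subgroup). In particular $\res_{G,P}$ is injective on $\coho{i}(G,A)_{(p)}$, giving the claimed injection $\res_p$.

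Finally, suppose $\coho{i}(P,A) = 1$ for every Sylow $p$-subgroup $P$ of $G$ and every prime $p \mid m$. Then by the injection just established, $\coho{i}(G,A)_{(p)} = 1$ for every prime $p \mid m$. But by part \emph{i)}, every element of $\coho{i}(G,A)$ has order dividing $m$, so $\coho{i}(G,A)$ is the direct sum of its $p$-primary parts for primes $p \mid m$; hence $\coho{i}(G,A) = 1$. No step is really an obstacle here since everything reduces to the well-known corestriction--restriction formula; the only thing to be careful about is the multiplicative notation for the group operation on $\coho{i}(G,A)$, which does not affect the argument.
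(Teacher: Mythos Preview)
Your argument is correct and is precisely the standard textbook proof via the corestriction--restriction transfer formula. The paper itself does not give a proof but simply cites Suzuki's book (Corollaries~2 and~3 of Theorem~7.26), where the same argument is carried out; so your approach coincides with what the reference provides.
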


\begin{proof}
	See, for example, Corollaries~2 and 3 of \cite[Theorem~7.26]{Suzuki}.
\end{proof}
Since any torsion abelian group (not necessarily finite) is a direct
sum of its $p$-torsion subgroups where $p$ runs through all torsion primes (see \cite[Theorem~5.5]{Suzuki}), Lemma~\ref{lem:basic-group-cohomology}\,\ref{lem:basic-group-cohomology-1} 
implies that $\coho{i}(G,A)_{(p)}$ is the $p$-primary component of $\coho{i}(G,A)$. In general, for any torsion abelian group 
$M$ we will denote its $p$-primary component (possibly trivial) by $M_{(p)}$. Similarly, we will write $m_{(q)}$ for the $q$-part of an element $m\in M$. 

\section{Reduction to the partial zeta series}
\label{sec:red_partial}

Let $G$ be a representation rigid profinite group, such that there exists a finite index normal pro-$p$ subgroup $N\leq G$. For example, one can take $G$ to be FAb and compact $p$-adic analytic (see \cite[Corollary~8.34]{DdSMS}). For any $K\leq G$ such that
$N\leq K$, let $K_p$ be a pro-$p$ Sylow subgroup of $K$. Since
$N$ is normal and pro-$p$ we necessarily have $N\leq K_p$. For $c\in \coho{2}(K_p/N)$, define
\[
\Irr_{K}^{c}(N)=\{\theta\in\Irr_{K}(N)\mid\cC_{K_p}(\theta)=c\},
\]
where $\cC_{K_p}$ is the function defined in Theorem~\ref{thm:Clifford-map}.
Note that any two choices of $K_p$ are $G$-conjugate, so up to the natural identification of the groups $\coho{2}(K_p/N)$, for different $K_p$, the set $\Irr_{K}^{c}(N)$ is independent of $K_p$.
We call
\[
\zetapartial=\sum_{\theta\in\Irr_{K}^{c}(N)}\theta(1)^{-s}
\]
a \emph{partial zeta series}. Since $\coho{2}(K_p/N)$ is finite,  there are only finitely
many partial zeta series and 
    \[
	Z_N(s)=\sum_{N\leq K\leq G}\sum_{c\in\coho{2}(K_p/N)} \zetapartial
    \]
for fixed $G$ and $N$.    
Following Jaikin-Zapirain \cite[Section~5]{Jaikin-zeta}, we show how the (virtual)
rationality of $Z_G(s)$, and thus of $\zeta_{G}(s)$, is reduced to the rationality in $p^{-s}$ of the partial zeta series.

Let $(K,N,\theta)$ be a character triple. By Clifford's theorem, $\lambda(1)/\theta(1)$ is an integer for any $\lambda\in\Irr(K\mid\theta)$, so we may define the finite Dirichlet series 
	\[
	f_{(K,N,\theta)}(s)=\sum_{\lambda\in\Irr(K\mid\theta)}\left(\frac{\lambda(1)}{\theta(1)}\right)^{-s}.
	\]

The following result is contained in \cite[Proposition~5.1]{Jaikin-zeta}.
We give a complete proof, which adds several steps involving Schur multipliers.
\begin{lem}
	\label{lem:Jaikins-prop}Let $N$ be a finite index pro-$p$ group
	in $K$ and let $(K,N,\theta)$ and $(K,N,\theta')$ be two character
	triples. If $\cC_{K_p}(\theta)=\cC_{K_p}(\theta')$, then
	\[
		 \cC_{K}(\theta)=\cC_{K}(\theta')\qquad\text{and}\qquad f_{(K,N,\theta)}(s)=f_{(K,N,\theta')}(s).
	\]
\end{lem}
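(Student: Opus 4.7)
The plan is to reduce the statement to Lemma~\ref{lem:Clifford-degree-ratios}. Once I establish that $\cC_{K}(\theta)=\cC_{K}(\theta')$, that lemma produces a bijection $\sigma\colon\Irr(K\mid\theta)\to\Irr(K\mid\theta')$ with $\lambda(1)/\theta(1)=\sigma(\lambda)(1)/\theta'(1)$, and summing $(\lambda(1)/\theta(1))^{-s}$ over $\lambda$ yields $f_{(K,N,\theta)}(s)=f_{(K,N,\theta')}(s)$. The entire content of the lemma is therefore the cohomological equality.

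If $\hat{\theta}$ strongly extends $\theta$ to $K$, then $\hat{\theta}|_{K_{p}}$ strongly extends $\theta$ to $K_{p}$, and the cocycle on $K_{p}/N$ built from it via Theorem~\ref{thm:Clifford-map} is the restriction of the cocycle on $K/N$. Thus the restriction map $\res\colon\coho{2}(K/N,\C^{\times})\to\coho{2}(K_{p}/N,\C^{\times})$ satisfies $\res(\cC_{K}(\theta))=\cC_{K_{p}}(\theta)$, and by hypothesis $\cC_{K}(\theta)\cC_{K}(\theta')^{-1}\in\ker\res$. Since $K_{p}/N$ is a Sylow $p$-subgroup of the finite group $K/N$, Lemma~\ref{lem:basic-group-cohomology}\,\emph{ii)} tells us that $\res$ is injective on the $p$-primary component of $\coho{2}(K/N,\C^{\times})$. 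It therefore suffices to show that $\cC_{K}(\theta)$ has $p$-power order (and similarly for $\theta'$).

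The main obstacle, and the step I would carry out most carefully, is this $p$-primary claim, which I would prove by a determinant calculation. Let $\hat{\alpha}$ be the factor set of $\hat{\theta}$, set $n=\theta(1)$, and write $\alpha\in\cocy{2}(K/N,\C^{\times})$ for the cocycle with $\hat{\alpha}(g,h)=\alpha(gN,hN)$. Taking determinants in $\hat{\theta}(g)\hat{\theta}(h)=\hat{\theta}(gh)\hat{\alpha}(g,h)$ gives $\hat{\alpha}(g,h)^{n}=\det\hat{\theta}(g)\det\hat{\theta}(h)/\det\hat{\theta}(gh)$. This is a coboundary identity on $K$ but need not descend to $K/N$. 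However, since $\theta$ factors through the finite $p$-group $N/\ker\theta$, both $n$ and $d\coloneqq\ord(\det\theta)$ are powers of $p$. The strong extension property yields $\det\hat{\theta}(gx)=\det\hat{\theta}(g)\det\theta(x)$ and $\det\hat{\theta}(xg)=\det\theta(x)\det\hat{\theta}(g)$ for $g\in K$ and $x\in N$, so $(\det\hat{\theta})^{d}$ is constant on $N$-cosets and descends to some $\bar{\delta}\colon K/N\to\C^{\times}$. Raising the previous identity to the $d$-th power then gives $\alpha(gN,hN)^{nd}=\bar{\delta}(gN)\bar{\delta}(hN)/\bar{\delta}(ghN)$, i.e.\ $\cC_{K}(\theta)^{nd}=1$ in $\coho{2}(K/N,\C^{\times})$, and $nd$ is a power of $p$. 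This yields the $p$-primary property, completing the proof.
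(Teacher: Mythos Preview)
Your proof is correct and takes a genuinely different route from the paper's. Both arguments reduce the problem to showing that $\cC_{K}(\theta)$ lies in the $p$-primary part of $\coho{2}(K/N)$, after which injectivity of $\res_{p}$ (Lemma~\ref{lem:basic-group-cohomology}) and Lemma~\ref{lem:Clifford-degree-ratios} finish things off exactly as you describe. Where you diverge is in establishing the $p$-primary claim: the paper argues prime by prime, showing for each $q\neq p$ that $\theta$ extends linearly to $K_{q}$ by invoking \cite[(8.16)]{Isaacs} (since $p\nmid|K_{q}/N|$), whence $\cC_{K_{q}}(\theta)=1$ and so $\cC_{K}(\theta)_{(q)}=1$ via injectivity of $\res_{q}$. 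Your determinant computation bypasses this extension theorem entirely and gives the global bound $\cC_{K}(\theta)^{nd}=1$ directly, with $nd$ a $p$-power because $N$ is pro-$p$. Your approach is more self-contained and arguably more elementary, extracting precisely the determinantal input that underlies Isaacs's extension criterion without needing the full result; the paper's approach, on the other hand, yields the slightly sharper intermediate statement that $\cC_{K_{q}}(\theta)$ is actually trivial for every $q\neq p$, and fits more naturally with the Sylow-by-Sylow philosophy used again later in Section~\ref{sec:Reduction-to-pro-p_twisted_case}.
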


\begin{proof}
	By the remark after Lemma~\ref{lem:basic-group-cohomology}, any $c\in\coho{2}(K/N)$ can be written as $c=\prod_{q}c_{(q)}$, where 
	$q$ runs through the primes dividing $|K/N|$ and $c_{(q)}\in \coho{2}(K/N)_{(q)}$ is the $q$-primary component of $c$. 
    Let $q$ be a prime
	dividing $|K/N|$ and let $K_{q}\leq K$ be such that $K_{q}/N$
	is a Sylow $q$-subgroup of $K/N$ (note that this agrees with our notation $K_p$ for $q=p$).
	By Lemma~\ref{lem:basic-group-cohomology}, $\res_{q}:\coho{2}(K/N)_{(q)}\rightarrow\coho{2}(K_{q}/N)$
	is injective.\par
	Since $\res_{K/N,K_{q}/N}$ has  a $q$-group as target, this homomorphism is trivial on $\coho{2}(K/N)_{(\ell)}$ for primes $\ell\neq q$; hence 
		$\res_{q}(\cC_{K}(\theta)_{(q)}) = \res_{K/N,K_{q}/N}(\cC_{K}(\theta))$.  
	Restricting a strong extension of $\theta$ to $K$ down to $K_p$, it is straightforward to show that 
	$\res_{K/N,K_{q}/N}(\cC_{K}(\theta)) = \cC_{K_{q}}(\theta)$, so
	\begin{equation}
	\res_{q}(\cC_{K}(\theta)_{(q)})=\cC_{K_{q}}(\theta) \quad \text{(and similarly for $\theta'$)}.
	\label{eq:res(C_K(theta))}
	\end{equation}
	
	Now, if $q\neq p$, then $p\nmid|K_{q}/N|$, so by \cite[(8.16)]{Isaacs},
	$\theta$ extends to $K_{q}$, and thus $\cC_{K_{q}}(\theta)=1$.
	By \eqref{eq:res(C_K(theta))} we obtain $\res_{q}(\cC_{K}(\theta)_{(q)})=1$,
	whence $\cC_{K}(\theta)_{(q)}=1$ (by the injectivity of $\res_{q}$). We
	must therefore have $\cC_{K}(\theta)=\cC_{K}(\theta)_{(p)}$, and since
	$\theta$ was arbitrary, we also have $\cC_{K}(\theta')=\cC_{K}(\theta')_{(p)}$.
	For $q = p$, \eqref{eq:res(C_K(theta))} gives 
	\[
	\res_{p}(\cC_{K}(\theta)_{(p)})=\cC_{K_p}(\theta)=\cC_{K_p}(\theta')=\res_{p}(\cC_{K}(\theta')_{(p)}),
	\]
	and we conclude that $\cC_{K}(\theta)_{(p)}=\cC_{K}(\theta')_{(p)}$,
	and thus $\cC_{K}(\theta)=\cC_{K}(\theta')$. 
	Now Lemma~\ref{lem:Clifford-degree-ratios} gives 
	a bijection $\sigma:\Irr(K\mid\theta)\rightarrow\Irr(K\mid\theta')$ such that
	$\lambda(1)/\theta(1)=\sigma(\lambda)(1)/\theta'(1)$. 
	Thus $f_{(K,N,\theta)}(s) = f_{(K,N,\theta')}(s)$ also holds.
\end{proof}

Let $\cS$ denote the set of subgroups $K\leq G$ such that $N\leq K$
and $\Stab_{G}(\theta)=K$, for some $\theta\in\Irr(N)$.
\begin{prop}\label{prop:partial-Main}
	Suppose that $\zetapartial$ is rational in $p^{-s}$, for every $K\in \cS$ and every $c\in \coho{2}(K_p/N)$. Then Theorem~\ref{thm:Main} holds.
\end{prop}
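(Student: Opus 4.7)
The plan is to use Clifford theory to decompose $Z_G(s)$ as a finite $\Q$-linear combination of terms of the form $m^{-s}\zetapartial$ for natural numbers $m$, and then invoke Lemma~\ref{lem:Jaikins-prop} together with the hypothesis on each $\zetapartial$. First, every $\chi\in\Irr(G)$ lies above a unique $G$-orbit of characters in $\Irr(N)$; for a representative $\theta$ of such an orbit, with stabiliser $K=\Stab_G(\theta)$, induction from $K$ to $G$ yields a bijection $\Irr(K\mid\theta)\to\Irr(G\mid\theta)$ multiplying degrees by $[G:K]$. Summing over orbits, rewriting the orbit sum as a sum over $\Irr(N)$ divided by the orbit sizes $[G:K_\theta]$, and grouping by $K$, I obtain
\[
Z_G(s)=\sum_{K\in\cS}[G:K]^{-1-s}\sum_{\theta\in\Irr_K(N)}\theta(1)^{-s}f_{(K,N,\theta)}(s).
\]

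By Lemma~\ref{lem:Jaikins-prop}, for a fixed $K\in\cS$ the function $f_{(K,N,\theta)}(s)$ depends only on the class $c=\cC_{K_p}(\theta)\in\coho{2}(K_p/N)$; write $f_{K,c}(s)$ for this common value. Partitioning $\Irr_K(N)$ by cohomology class then gives
\[
Z_G(s)=\sum_{K\in\cS}\sum_{c\in\coho{2}(K_p/N)}[G:K]^{-1-s}f_{K,c}(s)\zetapartial.
\]
Both outer sums are finite: $N$ is open in the compact group $G$ so $|G/N|<\infty$, and Schur multipliers of finite groups are finite.

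Now each $f_{K,c}(s)$ is a Dirichlet polynomial $\sum_d a_d d^{-s}$ where, via Lemma~\ref{lem:Clifford-extensions}, $d$ ranges over degrees of irreducible projective representations of $K/N$, so $d$ is a positive integer divisor of $|K/N|$. Consequently $[G:K]^{-1-s}f_{K,c}(s)$ is a $\Q$-linear combination of terms $m^{-s}$ with $m\in\N$, and hence $Z_G(s)$ is a finite $\Q$-linear combination of terms $m^{-s}\zetapartial$. In the general FAb case, the hypothesis that each $\zetapartial$ is rational in $p^{-s}$ combined with the factorisation $m=qp^a$ with $\gcd(q,p)=1$ gives $m^{-s}\zetapartial=q^{-s}\bigl(p^{-as}\zetapartial\bigr)$, where the expression in parentheses is a rational function in $p^{-s}$. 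This yields an expression for $Z_G(s)$ of the form \eqref{eqn:virtually-rational}, establishing virtual rationality.

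For the pro-$p$ case, $G$ itself is pro-$p$, so every $[G:K]$ is a power of $p$ and every $K/N$ is a finite $p$-group, whence every degree $d$ appearing in $f_{K,c}(s)$ is a power of $p$; every $m$ above is therefore a power of $p$, so $m^{-s}$ is itself a rational function of $p^{-s}$, and $Z_G(s)$ is rational in $p^{-s}$. The substantive input is Lemma~\ref{lem:Jaikins-prop}, which is already in hand; the present proposition is essentially a bookkeeping argument, the only subtle step being the careful separation of $p$-parts and $p'$-parts of the integers arising from $[G:K]$ and from $f_{K,c}(s)$ in order to match the form \eqref{eqn:virtually-rational} in the non-pro-$p$ case.
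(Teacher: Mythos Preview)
Your proof is correct and follows essentially the same route as the paper's own argument: both use the Clifford-theoretic decomposition to arrive at
\[
Z_G(s)=\sum_{K\in\cS}|G:K|^{-s-1}\sum_{c\in\coho{2}(K_p/N)}f_K^c(s)\,\zetapartial,
\]
invoke Lemma~\ref{lem:Jaikins-prop} to make $f_{(K,N,\theta)}(s)$ depend only on $c$, and then read off virtual rationality (and rationality in the pro-$p$ case) from the hypothesis. Your explicit $p$-part/$p'$-part splitting of $m$ is a small extra justification that the paper leaves implicit; the remark that the ratios $d=\lambda(1)/\theta(1)$ divide $|K/N|$ is not actually needed (integrality suffices), but it is harmless.
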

\begin{proof}
By Clifford's theorem, for every $\rho\in\Irr(G)$, there are $|G:\Stab_{G}(\theta)|$
distinct characters $\theta\in\Irr(N)$ such that $\rho\in\Irr(G\mid\theta)$.
Thus

\begin{align*}
Z_{G}(s) & =\sum_{\rho\in\Irr(G)}\rho(1)^{-s}=\sum_{\theta\in\Irr(N)}\frac{1}{|G:\Stab_{G}(\theta)|}\sum_{\rho\in\Irr(G\mid\theta)}\rho(1)^{-s}.
\end{align*}
By standard Clifford theory (see \cite[(6.11)]{Isaacs}), induction yields a bijection between
$\Irr(\Stab_{G}(\theta)\mid\theta)$ and $\Irr(G\mid\theta)$, for
every $\theta\in\Irr(N)$, so 
\[
\sum_{\rho\in\Irr(G\mid\theta)}\rho(1)^{-s}=\sum_{\lambda\in\Irr(\Stab_{G}(\theta)\mid\theta)}(\lambda(1)\cdot|G:\Stab_{G}(\theta)|)^{-s}.
\]
This implies that 
\begin{align*}
Z_{G}(s) & =\sum_{\theta\in\Irr(N)}|G:\Stab_{G}(\theta)|^{-s-1}\sum_{\lambda\in\Irr(\Stab_{G}(\theta)\mid\theta)}\theta(1)^{-s}\left(\frac{\lambda(1)}{\theta(1)}\right)^{-s}\\
 & =\sum_{\theta\in\Irr(N)}|G:\Stab_{G}(\theta)|^{-s-1}\theta(1)^{-s}f_{(\Stab_{G}(\theta),N,\theta)}(s)\\
 & =\sum_{K\in\cS}|G:K|^{-s-1}\sum_{\theta\in\Irr_{K}(N)}\theta(1)^{-s}f_{(K,N,\theta)}(s).
\end{align*}
By Lemma~\ref{lem:Jaikins-prop}, $f_{(K,N,\theta)}(s)=f_{(K,N,\theta')}(s)$,
for $\theta,\theta'\in\Irr(N)$ if $\cC_{K_p}(\theta)=\cC_{K_p}(\theta')$.
By the above, we can therefore write
	\begin{equation*}
		Z_{G}(s)=\sum_{K\in\cS}|G:K|^{-s-1}\sum_{c\in \coho{2}(K_p/N)}f_{K}^{c}(s)\zetapartial
	\end{equation*}
where $f_{K}^{c} (s) :=  f_{(K, N, \theta)}(s)$ for some (equivalently, any) character triple $(K, N, \theta)$ such that $\cC_{K_p}(\theta) = c$.

The set $\cS$ is finite and the group $\coho{2}(K_p/N)$ is also finite by \cite[(11.15)]{Isaacs}.
From the assumption that $\zetapartial$ is rational in $p^{-s}$, it now follows
that $Z_G(s)$, and hence $\zeta_{G}(s)$, is virtually rational. Moreover, if $G$ is pro-$p$, then $|G:K|$ is a power 
of $p$ for any subgroup $K$, and likewise $\lambda(1)$ is a power of $p$ for any $\lambda\in\Irr(K)$, 
so $f_{(K,N,\theta)}(s)$ is a polynomial in $p^{-s}$. Thus, when $G$ is pro-$p$, $Z_G(s)$, and hence 
$\zeta_{G}(s)$, is rational in $p^{-s}$.
\end{proof}

\section{Cohomology classes and degree one characters}
\label{sec:red_deg_one}
To prove the rationality in $p^{-s}$ of the partial zeta series $\zetapartial$ for $G$ FAb compact $p$-adic analytic,
we will prove that the set $\Irr_{K}^{c}(N)$ is in bijection with
the set of equivalence classes of a definable equivalence relation
on a definable set in $\man$. To this end, we need to show that the condition $\cC_{K_p}(\theta)=c$ is equivalent 
to a similar condition where $K_p$ is replaced by a subgroup $H$ of $K_p$ and $\theta$ is replaced by a character 
$\chi$ of $N\cap H$ of degree one. In this section we will state and prove the main technical result allowing for this 
reduction.

As in the previous section, let $G$ be a profinite group possessing a finite index normal pro-$p$ subgroup $N\leq G$. 
All the results in the present section are really theorems about finite groups with trivial generalisations to profinite groups, 
and the reader may assume that $G$ is finite with the discrete topology throughout the section (without changing any of the proofs). 
We work in the profinite setting because this is what we will need to apply the results to in Section~\ref{sec:proof_main}.

For any $K\leq G$ such that $N\leq K$, define the set
\[
\cH(K)=\{H\leq K\mid H\text{ open in }K,\,K=HN\}.
\]
From now on, and until the end of Section~\ref{sec:proof_main}, let $N\leq K\leq G$ be fixed.

\begin{lem}
\label{lem:IndRes}Let $\gamma\in \cocy{2}(K_p)$, $H\in\cH(K_p)$
and $\eta\in\PIrr_{\gamma_{H}}(H)$ be of degree one.
If $\Ind_{N\cap H, \gamma_N}^{N}\Res_{N\cap H}^{H}\eta$ or $\Res_{N}^{K_p}\Ind_{H,\gamma}^{K_p}\eta$
is irreducible, then
\[
\Ind_{N\cap H,\gamma_N}^{N}\Res_{N\cap H}^{H}\eta=\Res_{N}^{K_p}\Ind_{H,\gamma}^{K_p}\eta.
\]
\end{lem}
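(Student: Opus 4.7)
The plan is to prove the equality via a standard Frobenius reciprocity plus dimension argument, adapted to projective characters. First, observe that both characters are projective characters of $N$ with the same factor set, namely $\gamma_N$: the left-hand side by construction, and the right-hand side because $\Ind_{H,\gamma}^{K_p}\eta$ has factor set $\gamma$, which restricts to $\gamma_N$ on $N\times N$. Hence inner products $\langle\,\cdot\,,\cdot\,\rangle_N$ between them behave as usual and Frobenius reciprocity applies (Karpilovsky~III, Ch.~1, Lemma~9.18).

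Next I would compare degrees. Since $\eta$ has degree one, $\Ind_{N\cap H,\gamma_N}^{N}\Res_{N\cap H}^{H}\eta$ has degree $[N:N\cap H]$, while $\Res_{N}^{K_p}\Ind_{H,\gamma}^{K_p}\eta$ has degree $[K_p:H]$. By the hypothesis $K_p=HN$ and the second isomorphism theorem, $[K_p:H]=[HN:H]=[N:N\cap H]$, so both sides have the same degree.

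The core step is to show that the inner product
\[
\bigl\langle\,\Ind_{N\cap H,\gamma_N}^{N}\Res_{N\cap H}^{H}\eta,\;\Res_{N}^{K_p}\Ind_{H,\gamma}^{K_p}\eta\,\bigr\rangle_{N}
\]
is at least $1$. By Frobenius reciprocity, this equals
\[
\bigl\langle\,\Res_{N\cap H}^{H}\eta,\;\Res_{N\cap H}^{K_p}\Ind_{H,\gamma}^{K_p}\eta\,\bigr\rangle_{N\cap H}.
\]
A second application of Frobenius reciprocity shows $\eta$ is a constituent of $\Res_{H}^{K_p}\Ind_{H,\gamma}^{K_p}\eta$ (since $\langle\Ind_{H,\gamma}^{K_p}\eta,\Ind_{H,\gamma}^{K_p}\eta\rangle_{K_p}=\langle\eta,\Res_{H}^{K_p}\Ind_{H,\gamma}^{K_p}\eta\rangle_{H}\geq 1$), so after further restriction to $N\cap H$, the one-dimensional character $\Res_{N\cap H}^{H}\eta$ has positive inner product with $\Res_{N\cap H}^{K_p}\Ind_{H,\gamma}^{K_p}\eta$.

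Finally, the irreducibility hypothesis closes the argument: whichever of the two characters is irreducible must then occur as an irreducible constituent of the other. Since both sides have the same degree, they must coincide, proving the stated equality. I do not anticipate a serious obstacle here; the only points requiring care are keeping track of factor sets (so that all Frobenius reciprocity manipulations stay within the correct twisted setting) and the mild observation that $\Res_{N\cap H}^{H}\eta$ remains one-dimensional, hence irreducible as a projective character, so occurrence with positive multiplicity means it is a genuine constituent.
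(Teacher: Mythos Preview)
Your proposal is correct and follows essentially the same strategy as the paper: apply Frobenius reciprocity to reduce to an inner product on $N\cap H$, show this is at least $1$, and conclude via the degree equality $[N:N\cap H]=[K_p:H]$ plus the irreducibility hypothesis. The only minor difference is that the paper obtains the lower bound $1$ by expanding $\Res_{N\cap H}^{K_p}\Ind_{H,\gamma}^{K_p}\eta$ via Mackey's formula and isolating the identity-coset term, whereas you reach it by a second Frobenius reciprocity showing $\eta$ occurs in $\Res_H^{K_p}\Ind_{H,\gamma}^{K_p}\eta$ and then restricting further; both routes are standard and yield the same conclusion.
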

\begin{proof}
By Mackey's induction-restriction formula and Frobenius reciprocity for projective representations,  
\begin{multline*}
	\left\langle \Ind_{N\cap H, \gamma_N}^{N}\Res_{N\cap H}^{H}\eta,\Res_{N}^{K_p}\Ind_{H,\gamma}^{K_p}\eta\right\rangle 
						=\left\langle \Res_{N\cap H}^{H}\eta,\Res_{N\cap H}^{K_p}\Ind_{H,\gamma}^{K_p}\eta\right\rangle \\
	=\sum_{\bar{g}\in(N\cap H)\backslash K_p/H}\left\langle \Res_{N\cap H}^{H}\eta,
						\Ind_{N\cap H\cap\leftexp{g}{H}, \gamma_{N\cap H}}^{N\cap H}
							\Res_{N\cap H\cap\leftexp{g}{H}}^{\leftexp{g}{H}}\leftexp{g}{\eta}\right\rangle \\
	\geq\left\langle \Res_{N\cap H}^{H}\eta,\Res_{N\cap H}^{H}\eta\right\rangle =1.
\end{multline*}
Here $g\in K_p$ denotes an arbitrary representative of $\bar{g}$. Since $K_p=HN$ and 
	\[
		|K_p:N|\cdot|N:N\cap H|=|K_p:H|\cdot|H:N\cap H|=|K_p:H|\cdot |HN:N|,
	\]
we deduce that $|N:N\cap H| = |K_p:H|$. Hence $\Ind_{N\cap H, \gamma_N}^{N}\Res_{N\cap H}^{H}\eta$
and $\Res_{N}^{K_p}\Ind_{H,\gamma}^{K_p}\eta$ have the same degree, so if one of them is irreducible, they are equal.
\end{proof}

For $H\leq K_p$ such that $K_p=HN$, we let $\widetilde{f}_{H}:\cocy{2}(H/(N\cap H))\rightarrow \cocy{2}(K_p/N)$
be the isomorphism induced by pulling back cocycles along the isomorphism
$K_p/N\rightarrow H/(N\cap H)$. We describe this isomorphism more explicitly. 
Since $K_p = HN$, every coset in $K_p/N$ contains a unique coset in $H/(N\cap H)$. 
Then, for $\alpha\in \cocy{2}(H/(N\cap H))$ and $g,g' \in K_p$, 
	\begin{equation}\label{f-tilde-explicit}
		\widetilde{f}_{H}(\alpha)(gN, g'N) = \alpha(h(N \cap H), h'(N \cap H))
	\end{equation}
where $h,h'$ are such that $h(N \cap H)\subseteq gN$ and $h'(N \cap H)\subseteq g'N$. 
Moreover, for $\beta\in \cocy{2}(K_p/N)$ and $h,h'\in H$, 
	\[
		\widetilde{f}_{H}^{-1}(\beta)(h(N \cap H), h'(N \cap H)) = \beta(hN, h'N).
	\]
We denote by $f_{H}$ the corresponding induced isomorphism 
	\[
		\coho{2}(H/(N\cap H))\longrightarrow \coho{2}(K_p/N).
	\]
\begin{prop}
\label{prop:Linearisation}
	Let $(K,N,\theta)$ be a character triple. Then there exists an $H\in\cH(K_p)$
	and a character triple $(H,N\cap H,\chi)$ such that:
	\begin{enumerate}
		\item \label{enu:i} $\chi$ is of degree one,
		\item \label{enu:ii} $\theta=\Ind_{N\cap H}^{N}\chi$,
		\item \label{enu:iii} $\cC_{K_p}(\theta)=f_{H}(\cC_{H}(\chi))$.
	\end{enumerate}
	Moreover, let $H\in\cH(K_p)$ be such that $(H,N\cap H,\chi)$ is a
	character triple with $\chi$ of degree one, such that $(K,N,\theta)$
	is a character triple, where $\theta=\Ind_{N\cap H}^{N}\chi$. Then
	$\cC_{K_p}(\theta)=f_{H}(\cC_{H}(\chi))$. 
\end{prop}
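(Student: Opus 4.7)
The strategy is to apply the monomiality of projective representations of pro-$p$ groups (Lemma~\ref{lem:projective-monomial}) to a strong extension of $\theta$, and then verify that the resulting one-dimensional piece restricts to a genuine degree-one character of $N\cap H$ carrying the right cohomological data. The ``Moreover'' statement will then follow by reversing the construction.

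For the existence part, fix a strong extension $\hat{\theta}\in\PIrr_{\hat{\alpha}}(K_p)$ of $\theta$, so that $\hat{\alpha}$ is the pullback of a representative $\alpha\in\cocy{2}(K_p/N)$ of $\cC_{K_p}(\theta)$. By Lemma~\ref{lem:projective-monomial}, $\hat{\theta}=\Ind_{H,\hat{\alpha}}^{K_p}\eta$ for some open subgroup $H\leq K_p$ and some one-dimensional $\eta\in\PIrr_{\hat{\alpha}_H}(H)$. Since $\hat{\alpha}_N$ is trivial (being the pullback of a cocycle on $K_p/N$), the Mackey decomposition of $\Res_N^{K_p}\Ind_{H,\hat{\alpha}}^{K_p}\eta$ is a sum of $|N\backslash K_p/H|$ non-zero ordinary induced characters; equality with the irreducible $\theta$ forces $|N\backslash K_p/H|=1$, i.e.\ $K_p=NH$, so $H\in\cH(K_p)$. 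Lemma~\ref{lem:IndRes} then yields $\theta=\Ind_{N\cap H}^N\chi$ where $\chi\coloneqq\Res_{N\cap H}^H\eta$. The strong extension property, via Lemma~\ref{lem:factor-set-gn}, says that $\hat{\alpha}$ is trivial on $K_p\times N$ and on $N\times K_p$; in particular $\hat{\alpha}_H$ is trivial on $(N\cap H)\times(N\cap H)$, so $\chi$ is a genuine degree-one character, and it is trivial on $H\times(N\cap H)$ and $(N\cap H)\times H$, so $\eta$ strongly extends $\chi$ (making $(H,N\cap H,\chi)$ a character triple). Writing $\beta\in\cocy{2}(H/(N\cap H))$ for the factorization of $\hat{\alpha}_H$, we have $\cC_H(\chi)=[\beta]$, and the explicit formula~\eqref{f-tilde-explicit} shows $\widetilde{f}_H(\beta)=\alpha$, whence $f_H(\cC_H(\chi))=[\alpha]=\cC_{K_p}(\theta)$.

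The ``Moreover'' direction runs this construction backwards. Given $(H,N\cap H,\chi)$ with $\chi$ of degree one and $\theta=\Ind_{N\cap H}^N\chi$ irreducible, let $\hat{\chi}\in\PIrr_\beta(H)$ be a strong extension of $\chi$, where $\beta$ is the pullback of some $\beta_0\in\cocy{2}(H/(N\cap H))$ representing $\cC_H(\chi)$. Transport $\beta_0$ to $\alpha\coloneqq\widetilde{f}_H(\beta_0)\in\cocy{2}(K_p/N)$ and pull back to $\hat{\alpha}\in\cocy{2}(K_p)$; by construction $\hat{\alpha}_H=\beta$, so one may form $\hat{\theta}\coloneqq\Ind_{H,\hat{\alpha}}^{K_p}\hat{\chi}$. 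Since $\hat{\alpha}_N$ is trivial and $\theta$ is irreducible, Lemma~\ref{lem:IndRes} yields $\Res_N^{K_p}\hat{\theta}=\theta$; and $\hat{\alpha}$ being trivial on $K_p\times N$ and $N\times K_p$ implies via Lemma~\ref{lem:factor-set-gn} that $\hat{\theta}$ is a strong extension of $\theta$, giving $\cC_{K_p}(\theta)=[\alpha]=f_H([\beta_0])=f_H(\cC_H(\chi))$. The main technical point throughout is bookkeeping: the normalization provided by strong extensions makes every relevant factor set trivial on products with an argument in $N$ (or $N\cap H$), which is precisely what turns the projective objects arising from monomiality back into genuine characters and identifies cocycles under $\widetilde{f}_H$ without any coboundary adjustment.
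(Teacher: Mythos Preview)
Your proof is correct and follows essentially the same approach as the paper's: apply projective monomiality to a strong extension $\hat{\theta}$, use Mackey to force $K_p=HN$, then identify $\cC_H(\chi)$ with $\cC_{K_p}(\theta)$ via $f_H$; and for the converse, induce a strong extension of $\chi$ using the transported factor set and verify it strongly extends $\theta$. The only cosmetic difference is how you derive $K_p=HN$: the paper computes $\langle\theta,\theta\rangle$ via Mackey and bounds it below by $|K_p:HN|$, whereas you observe directly that an irreducible character cannot equal a sum of two or more nonzero characters in the Mackey decomposition; both arguments are equivalent.
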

\begin{proof}
Assume that $(K,N,\theta)$ is a character triple. By Theorem~\ref{thm:Clifford-map},
there exists an $\alpha\in \cocy{2}(K_p/N)$ such that $[\alpha]=\cC_{K_p}(\theta)$
and a $\hat{\theta}\in\PIrr_{\hat{\alpha}}(K_p)$ strongly extending
$\theta$. 
Note that by Lemma~\ref{lem:factor-set-gn}, $\hat{\alpha}(n,x)=\hat{\alpha}(x,n)=1$, for all $n\in N$ and $x\in K_p$,
so in particular, $\hat{\alpha}_{N}=1$. 

By Lemma~\ref{lem:projective-monomial}, there exist an
open subgroup $H$ of $K_p$ and $\eta\in\PIrr_{\hat{\alpha}_{H}}(H)$
of degree one such that $\hat{\theta}=\Ind_{H, \hat{\alpha}}^{K_p}\eta$. Then $\theta=\Res_{N}^{K_p}\Ind_{H, \hat{\alpha}}^{K_p}\eta$
is irreducible, so 
\begin{align*}
1 & =\left\langle\Res_{N}^{K_p}\Ind_{H, \hat{\alpha}}^{K_p}\eta,\Res_{N}^{K_p}\Ind_{H, \hat{\alpha}}^{K_p}\eta\right\rangle=\\
 & =\sum_{\bar{g}\in N\backslash K_p/H}\sum_{\bar{h}\in N\backslash K_p/H}\left\langle \Ind_{N\cap\leftexp{g}{H}}^{N}\Res_{N\cap\leftexp{g}{H}}^{\leftexp{g}{H}}\leftexp{g}{\eta},\Ind_{N\cap\leftexp{h}{H}}^{N}\Res_{N\cap\leftexp{h}{H}}^{\leftexp{h}{H}}\leftexp{h}{\eta}\right\rangle \\
 & =\sum_{\bar{g}\in K_p/HN}\sum_{\bar{h}\in K_p/HN}\left\langle \Ind_{N\cap\leftexp{g}{H}}^{N}\Res_{N\cap\leftexp{g}{H}}^{\leftexp{g}{H}}\leftexp{g}{\eta},\Ind_{N\cap\leftexp{h}{H}}^{N}\Res_{N\cap\leftexp{h}{H}}^{\leftexp{h}{H}}\leftexp{h}{\eta}\right\rangle \\
 & \geq\sum_{\bar{g}\in K_p/HN}\left\langle \Ind_{N\cap\leftexp{g}{H}}^{N}\Res_{N\cap\leftexp{g}{H}}^{\leftexp{g}{H}}\leftexp{g}{\eta},\Ind_{N\cap\leftexp{g}{H}}^{N}\Res_{N\cap\leftexp{g}{H}}^{\leftexp{g}{H}}\leftexp{g}{\eta}\right\rangle \\
 & \geq |K_p:HN|.
\end{align*}
Thus, $|K_p:HN|=1$, and so $K_p=HN$, that is, $H\in\cH(K_p)$.

Next, let $\chi=\Res_{N\cap H}^{H}\eta$. Then $\chi$ is fixed by
$H$, and Lemma~\ref{lem:IndRes} (with $\gamma=\hat{\alpha}$) implies that
$\theta=\Ind_{N\cap H}^{N}\chi$. 
Moreover, let $\alpha_{H}\in \cocy{2}(H/(N\cap H))$ be defined as 
	\[
		\alpha_{H}(h(N\cap H),h'(N\cap H))=\hat{\alpha}_{H}(h,h') \quad \text{ for } h,h' \in H.
	\]
Then $f_{H}([\alpha_{H}])=\cC_{K_p}(\theta)$. We conclude that $\cC_{K_p}(\theta)=f_{H}(\cC_{H}(\chi))$,
because $\eta$ strongly extends $\chi$.\par
Assume now that $(H,N\cap H,\chi)$ and $(K,N,\theta)$ are as in the second part of the statement.
By Theorem~\ref{thm:Clifford-map},
there exists a $\beta\in \cocy{2}(H/(N\cap H))$ and a $\hat{\chi}\in\PIrr_{\hat{\beta}}(H)$
strongly extending $\chi$, such that $[\beta]=\cC_{H}(\chi)$.  
Let $\gamma\in \cocy{2}(K_p)$ be the pull-back of $\widetilde{f}_{H}(\beta)\in \cocy{2}(K_p/N)$.
Then, for any $h,h'\in H$,  
    \[
	\gamma_{H}(h,h')=\gamma(h,h')=\widetilde{f}_H(\beta)(hN,h'N)=\beta(h(N\cap H),h'(N\cap H))=\hat{\beta}(h,h'),
    \]
where in the second to last step we have used \eqref{f-tilde-explicit}.
Thus $\gamma_H=\hat{\beta}$, and since $\theta$ is irreducible, Lemma~\ref{lem:IndRes} (with $\eta=\hat{\chi})$ implies that 
\[
\theta=\Ind_{N\cap H, \gamma_N}^{N}\Res_{N\cap H}^{H}\hat{\chi}=\Res_{N}^{K_p}\Ind_{H, \gamma}^{K_p}\hat{\chi}.
\]
Hence $\Ind_{H, \gamma}^{K_p}\hat{\chi}$ is an extension of $\theta$ and we show that it is in fact a strong extension (see Definition~\ref{def:stron_ext}).
Indeed, as $\gamma$ is constant on cosets of $N$ in $K_p$, 
\[
\gamma(x,n)=\gamma(hn',n)=\gamma(h,1)=\gamma_{H}(h,1)=\hat{\beta}(h,1)=1,
\]
where we have written $x=hn'$, with $h\in H$, $n'\in N$ and $\hat{\beta}(h,1)=1$
by Lemma~\ref{lem:factor-set-gn}, because $\hat{\beta}$ is the
factor set of a strong extension. In a similar way, we show that $\gamma(n, x) = 1$; thus, by Lemma~\ref{lem:factor-set-gn},
we conclude that $\Ind_{H, \gamma}^{K_p}\hat{\chi}$ strongly extends $\theta$. 
Since $\Ind_{H, \gamma}^{K_p}\hat{\chi}$ has factor set 
$\gamma$, which descends (modulo $N$) to $\widetilde{f}_H(\beta)$,
it follows that $\cC_{K_p}(\theta)=[\widetilde{f}_{H}(\beta)]=f_{H}([\beta])=f_{H}(\cC_H(\chi))$.
\end{proof}
It will be useful for us to state a consequence of Proposition~\ref{prop:Linearisation} in terms of a 
commutative diagram. To this end, let $X_K$ 
\label{def:X_K}
be the set of pairs 
$(H,\chi)$ with $H\in\cH(K_p)$, where:
\begin{enumerate}
\item $(H,N\cap H,\chi)$ is a character triple.
\item $\chi$ is of degree one,
\item $\Ind_{N\cap H}^{N}\chi\in \Irr_K(N)$. 
\end{enumerate}
Note that $\theta \in\Irr_K(N)$ means that $K=\Stab_G(\theta)$, and not merely that $K$ is contained in the stabiliser. 
Define the function
\[
\cC:X_K\longrightarrow \coho{2}(K_p/N)
\]
by $\cC(H,\chi)=f_{H}(\cC_{H}(\chi))$. 
\begin{cor}
\label{cor:surj_coho}
The function $X_K\rightarrow\Irr_{K}(N)$, $(H,\chi)\mapsto\Ind_{N\cap H}^{N}\chi$ is surjective and the following 
diagram commutes:
    \[
	\begin{tikzcd}[column sep=0.4cm] 
	    X_K \arrow{r} \arrow{dr}[swap]{\cC} & \Irr_{K}(N)\arrow{d}{\cC_{K_p}}\\
	    {}				  & \coho{2}(K_p/N).
	\end{tikzcd}
    \]
\end{cor}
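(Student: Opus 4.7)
The plan is to read this off directly from Proposition~\ref{prop:Linearisation}, since the corollary is really just a repackaging of that proposition in diagrammatic form.

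For surjectivity, I would take an arbitrary $\theta \in \Irr_K(N)$, which means in particular that $(K,N,\theta)$ is a character triple with $\Stab_G(\theta)=K$. The first part of Proposition~\ref{prop:Linearisation} produces some $H \in \cH(K_p)$ and a character triple $(H, N\cap H, \chi)$ with $\chi$ of degree one and $\theta = \Ind_{N\cap H}^{N}\chi$. To conclude $(H,\chi)\in X_K$, I need to verify the three defining conditions for $X_K$: condition~(i) is that $(H, N\cap H, \chi)$ is a character triple (given by the proposition), condition~(ii) is that $\chi$ has degree one (given), and condition~(iii) is that $\Ind_{N\cap H}^{N}\chi \in \Irr_K(N)$, which holds since this induced character equals $\theta$, chosen in $\Irr_K(N)$. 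Hence $(H,\chi) \in X_K$ maps to $\theta$.

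For commutativity of the diagram, I would take an arbitrary $(H,\chi) \in X_K$ and set $\theta = \Ind_{N\cap H}^{N}\chi$. By the definition of $X_K$, this is an element of $\Irr_K(N)$ and $(K,N,\theta)$ is a character triple; moreover $(H, N\cap H, \chi)$ is a character triple with $\chi$ of degree one. This is precisely the hypothesis of the second part of Proposition~\ref{prop:Linearisation}, so we obtain
\[
\cC_{K_p}(\theta) = f_H(\cC_H(\chi)) = \cC(H,\chi),
\]
where the last equality is the definition of $\cC$. This is exactly the commutativity of the triangle.

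There is no real obstacle here; both halves of the corollary are immediate once one has Proposition~\ref{prop:Linearisation} in hand. The only thing worth being careful about is to check that the hypotheses of the second half of the proposition (character triple property of $(K,N,\theta)$ and of $(H,N\cap H,\chi)$) are indeed built into the definition of $X_K$, so that no extra assumption is needed when applying the proposition to an arbitrary element of $X_K$.
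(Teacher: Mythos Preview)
Your proposal is correct and follows essentially the same approach as the paper's own proof, which simply says that surjectivity follows from the first statement of Proposition~\ref{prop:Linearisation} and commutativity from the second. Your version is slightly more explicit in checking that the pair $(H,\chi)$ produced by the proposition actually lies in $X_K$, but the argument is identical in substance.
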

	\begin{proof}
	Every $\theta\in\Irr_K(N)$ defines a character triple $(K,N,\theta)$. Thus, the surjectivity follows from the first statement 
	in Proposition~\ref{prop:Linearisation}. The commutativity of the diagram follows by the second statement in 
	Proposition~\ref{prop:Linearisation}.
	\end{proof}
\section{Rationality of the partial zeta series}
\label{sec:proof_main}
From now on, let $G$ be a FAb compact $p$-adic analytic group and let $N\leq G$ be a normal uniform subgroup. As in Section~\ref{sec:red_deg_one}, let $K\leq G$ be such that $N\leq K$ and fix a pro-$p$ Sylow subgroup $K_p$ of $K$.
In this section we show that the set of characters $\Irr_K^c(N)$, for each $c\in\coho{2}(K_p/N)$, 
is in bijection with a set of equivalence classes under a definable equivalence relation in $\man$. 
We deduce from this that each partial zeta series is rational in $p^{-s}$ and hence prove 
Theorem~\ref{thm:Main}.
\subsection{Bases for $p$-adic analytic groups}
\label{subsec:Good-bases}

Recall from Section~\ref{sec:red_deg_one} that $\cH(K_p) \allowbreak = \{H\leq K_p\mid H\text{ open in }K_p,\,K_p=HN\}$. 
In this section, we describe du~Sautoy's parametrisation of $\cH(K_p)$.

One starts by parametrising open subgroups of $N$. The following definition is from 
\cite[p.~259]{duSautoy-Segal-in-Horizons} and is equivalent to \cite[Definition~2.2]{duSautoy-rationality}. 
Some properties characterising open subgroups of $N$ and 
some notation are necessary to state it. A subgroup $H$ of $N$ is open if and only if it contains $N_{m}$
for some $m\geq 1$, where $N_m$ denotes the $m$-th term of the 
lower $p$-series of $N$. Moreover, as $N$ is uniform, raising to the power of $p$ induces an 
isomorphism $N_i/ N_{i + 1}\rightarrow N_{i + 1}/ N_{i + 2}$ and $N_{i+1}$ is the Frattini subgroup 
of $N_i$, for all $i\in \N$ (see \cite[Lemma~2.4, Definition~4.1\,(iii)]{DdSMS}). 
Thus $N_i/N_{i+1}$ is an $\F_p$-vector space, and denoting by $d = \dim_{\mathbb{F}_p} N/N_2$ 
the minimum number of topological generators for $N$, each quotient $N_i/N_{i+1}$ is isomorphic to 
$\mathbb{F}_p^d$. Recall the function $\omega$ in 
Definition~\ref{def:omega}.
\begin{defn}
\label{def:good_basis}
	Let $H\leq N$ be open with $N_{m}\leq H$. A $d$-tuple $(h_{1},\dots,h_{d})$
	of elements in $H$ is called a \emph{good basis} for $H$ if
	\begin{enumerate}
		\item 
		$\omega(h_{i})\leq\omega(h_{j})$ whenever $i\leq j$, and
		\item 
		\label{def:good_basis_2}
		for each $n\leq m$, the set
		\[
		\left\{ h_{i}^{p^{n-\omega(h_{j})}}N_{n+1}\bigm|i\in\{1,\dots,d\},\,\omega(h_{i})\leq n\right\} 
		\]
		is a basis for the $\F_{p}$-vector space $(N_{n}\cap H)N_{n+1}/N_{n+1}$.
	\end{enumerate}
\end{defn}
Notice that the definition is constructive so a good basis for an open subgroup of $N$ always exists. 
Notice also that a good basis for $N$ is just an ordered minimal set of topological generators of $N$ and that, by 
\cite[Lemma~2.4\,(i)]{duSautoy-rationality}, if $H$ is an open subgroup of $N$ and
$(h_1, \dots, h_d)$ is a good basis for $H$, then for every $h \in H$ there are 
$\lambda_1, \dots, \lambda_d \in \Z_p$ such that 
	\[
		h = h_1^{\lambda_1}\cdots h_d^{\lambda_d}.
	\]
The recursive construction in the proof of \cite[Lemma~2.4\,(i)]{duSautoy-rationality} 
implies that $\lambda_1, \dots, \lambda_d$ are unique with the property above.
\begin{rem}
Good bases give a many-to-one parametrisation of the set of finite 
index subgroups of $N$ in terms of $p$-adic analytic coordinates. Indeed the set of 
good bases is definable in $\struc_N$ by \cite[Lemma~2.8]{duSautoy-rationality}. 
By Lemma~\ref{lem:int_M_N}, using $\Z_p$-coordinates for $N$, the set of good bases 
 is interpreted as a definable set in $\man$.
 \end{rem}
The parametrisation of $\cH(K_p)$ is obtained by extending the parametrisation given 
by good bases. Let $\indexPN=|K_p:N|$. Fix a left transversal $(y_{1},\dots,y_{\indexPN})$ for 
$N$ in $K_p$ with $y_1=1$.
Every coset $y_iN$ contains a unique coset $x(N\cap H)$, with $x\in H$. Thus, $x=y_it_i$ for some 
$t_i\in N$, and we conclude that there exist elements $t_1,\dots,t_{\indexPN}\in N$ such that 
$(y_1t_1,\dots,y_{\indexPN}t_{\indexPN})$ is a left transversal for $N\cap H$ in $H$. 
The following definition is from \cite[Definition~2.10]{duSautoy-rationality};
see also \cite[p.~261]{duSautoy-Segal-in-Horizons} (note that we use left cosets instead of du~Sautoy's right coset convention).
\begin{defn}
\label{def:basis}
	Let $H\in\cH(K_p)$. A $(d+\indexPN)$-tuple $(h_{1},\dots,h_{d},t_{1},\dots,t_{\indexPN})$
	of elements in $N$ is called a \emph{basis} for $H$ if
	\begin{enumerate}
		\item $(h_{1},\dots,h_{d})$ is a good basis for $N\cap H$, and
		\item $(y_{1}t_{1},\dots,y_{\indexPN}t_{\indexKpN})$ is a (left) transversal for $N\cap H$ 
		in $H$.
	\end{enumerate}
\end{defn}
If $(h_{1},\dots,h_{d},t_{1},\dots,t_{\indexPN})$
is a basis for $H\in  \cH(K_p)$, it follows from the definition that
	\[
		H = \bar{\langle h_1, \dots h_d, y_1t_1, \dots,  y_\indexPN t_\indexPN\rangle}.
	\]
In particular, unlike a good basis for $N$, a basis for $H$ need not be a (topological) generating set for $H$.
Notice moreover that a basis of $H$ always exists: it suffices to construct a good basis 
$(h_{1},\dots,h_{d})$ of $N\cap H$ 
as described in Definition~\ref{def:good_basis} and then find $t_1,\dots, t_\indexPN$ using that
each coset of $N$ in $K_p$ contains a unique coset of $N\cap H$ in 
$H$ because $K_p = HN$. 
The groups, transversals and bases appearing above are illustrated
by the following diagrams:
\[
\begin{tikzcd}[column sep={3em,between origins},row sep={2.5em,between origins}] 
{} &  K_p\arrow[dash]{ddl}\arrow[dash]{dr} & {}\\
{} & {} & H\arrow[dash]{ddl}\\
N\arrow[dash]{dr} & {} & {}\\ 
{}  & N\cap H & {}
\end{tikzcd}
\qquad\qquad\qquad
\begin{tikzcd}[column sep={3.5em,between origins},row sep={3em,between origins}] 
{} &  (y_1,\dots,y_\indexPN)\arrow[dash]{ddl}\arrow[dash]{dr} & {}\\
{} & {} & (y_1t_1,\dots, y_\indexPN t_\indexPN)\arrow[dash]{ddl}\\
(t_1,\dots,t_\indexPN)\arrow[dash]{dr} & {} & {}\\ 
{}  & (h_1,\dots,h_d) & {}
\end{tikzcd}
\]
\begin{rem}
By \cite[Lemma~2.12]{duSautoy-rationality}, the set of bases is definable in $\struc_N$, hence, by 
Lemma~\ref{lem:int_M_N}, can be interpreted as a definable set in $\man$ by passing to $\Z_p$-coordinates for~$N$.
\end{rem}
\subsection{The fibres of $\cC$ in terms of degree one characters.}
\label{subsec:Def_of_fibres}
From now on, let $c \in \coho{2}(K_p/\uniform)$. The aim of this section is to show that the set
$\cC^{-1}(c)$ may be characterised by a predicate involving only elements of $N$ and degree one 
characters of finite index subgroups of $N$. We will at the end of the section produce an $\Lan$ formula 
for the fibre of $\cC$. We therefore start by reducing the range for $c$ to a cohomology group with 
values in the group of roots of unity of order a power of $p$. 
In order to do this, we need to set up some notation. Let $W\leq \C^{\times}$ be the group of roots of 
unity. This is a torsion abelian group so it splits as
\begin{equation*}
W = \prod_{\ell \text{ prime}} W_{(\ell)}
\end{equation*}
where $W_{(\ell)}\leq W$ is the group of roots of unity of order a power of $\ell$. It is clear that $W$ 
is a divisible group so by \cite[XX, Lemma~4.2]{Lang-Algebra} it is injective in the category of abelian groups, 
hence it is complemented in $\C^{\times}$. We may therefore fix a homomorphism $\C^{\times}\to W$, and for each 
prime $\ell$ denote by $\pi_\ell:\C^\times \to W_{(\ell)}$ the homomorphism obtained by composing with the 
projection $W\to W_{(\ell)}$.

If $f$ is a function with image inside $\C^{\times}$ and $\ell$ is a prime, we define 
    \[
	f_{(\ell)} = \pi_{\ell}\circ f.
    \]
Note that if $f$ has finite order, 
that is, if $f$ has image in 
$W$, then $f_{(\ell)}$ coincides with the $\ell$-primary 
component of $f$. Moreover, since $\pi_{\ell}$ is a homomorphism, 
$(ff')_{(\ell)} = f_{(\ell)}f'_{(\ell)}$ for all $f,f'$ with the same domain and with codomain $\C^{\times}$.\par

We introduce the following groups:
	\begin{align*}
		\twococyp     &= \cocy{2}(K_p/ \uniform, W_{(p)}),	&
		&\text{and}&
		\twocobop     &= \cobo{2}(K_p/ \uniform, W_{(p)}).								    
	\end{align*}
By \cite[(11.15)]{Isaacs} and its proof, every class in $\coho{2}(K_p/ \uniform)$ has a representative in 
$\twococyp$. Moreover, let $\delta \in \cobo{2}(K_p/ \uniform) \cap \twococyp$. Then, by definition, there is a function 
$\varphi: K/N \to \C^{\times}$ such that, for all $a,b \in K/N$, 
	\[
		\delta(a,b) = \varphi(a) \varphi(b) \varphi(ab)^{-1}.
	\]
Now $\delta$ has values in $W_{(p)}$ already,  so, for all $a,b \in K/N$, 
	\[
		\delta (a,b) = \delta_{(p)} (a,b)= \varphi_{(p)} (a) \varphi_{(p)} (b) \varphi_{(p)} (ab)^{-1}.
	\]
Thus $\delta \in \twocobop$, and $\cobo{2}(K_p/ \uniform) \cap \twococyp = \twocobop$. 
It follows that the inclusion of $\twococyp$ in $\cocy{2}(K_p/ \uniform)$ induces an 
isomorphism $\coho{2}(K_p/ \uniform)\cong \twococyp/\twocobop$.\par
We now turn to describing the fibres of the map $\cC$.   
Define $a_{ij}\in N$ and 
$\gamma:\lbrace 1, \dots, \indexPN \rbrace^{2}\rightarrow\lbrace 1,\dots,\indexPN \rbrace$ by
\begin{equation}
\label{eq:gamma}
y_{i}y_{j}=y_{\gamma(i,j)}a_{ij}.
\end{equation}
We also define the inner automorphisms $\varphi_{i}=\varphi_{y_i}:G\rightarrow G$,
$\varphi_{i}(g)=y_{i}gy_{i}^{-1}$, for $g\in G$. 
The purpose of the following lemma is to show that the fibres of $\cC$ are given by a first order statement involving only values of 
degree one characters, cocycles and coboundaries. 
\begin{lem}
\label{lem:first_o_formula_cohomology}
Let $(H,\chi)\in X_K$ and 
$t_1,\dots,t_{\indexPN}\in N$ such that $(y_1t_1,\dots,y_{\indexPN}t_{\indexPN})$ is a left transversal 
for $N\cap H$ in $H$. Let $\alpha\in \twococyp$ such that $[\alpha] = c$. 
Then $\cC(H,\chi)= c$ if and only if there exists $\delta\in \twocobop$
such that for all $n,n'\in N\cap H$ and all $i,j\in\{1,\dots,\indexPN\}$,
we have
\begin{equation}
\label{eq:formula_cohomology}
\chi(t_{\gamma(i,j)}^{-1}a_{ij}\varphi_{j}^{-1}(t_{i}n)t_{j}n')\alpha(y_{i}N, y_{j}N)\delta(y_{i}N, y_{j}N)=\chi(nn').
\end{equation}
\end{lem}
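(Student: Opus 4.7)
The plan is to construct an explicit strong extension $\hat{\chi}$ of $\chi$ using the transversal $(y_1 t_1, \ldots, y_r t_r)$ for $N \cap H$ in $H$, compute its factor set, transfer via $\widetilde{f}_H$, and unwind the resulting coboundary condition. By Theorem~\ref{thm:Clifford-map}, $\chi$ admits a strong extension $\hat\chi \in \PIrr_{\hat\beta}(H)$. Being of degree one and factoring through a finite quotient of the pro-$p$ group $H$, $\hat\chi$ takes values in $W_{(p)}$. Writing $c_i := \hat\chi(y_i t_i) \in W_{(p)}$ and using \eqref{eq:gamma}, a direct product computation gives
\[
(y_i t_i n)(y_j t_j n') = y_{\gamma(i,j)} t_{\gamma(i,j)} \cdot m, \qquad m := t_{\gamma(i,j)}^{-1} a_{ij} \varphi_j^{-1}(t_i n) t_j n' \in N \cap H,
\]
so that by the strong extension property
\[
\hat\beta(y_i t_i n, y_j t_j n') = \frac{c_i c_j \, \chi(nn')}{c_{\gamma(i,j)} \, \chi(m)}.
\]

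The cocycle $\hat\beta$ descends to $\beta \in \cocy{2}(H/(N \cap H))$ representing $\cC_H(\chi)$. Setting $n = n' = 1$ yields $\widetilde{f}_H(\beta)(y_i N, y_j N) = c_i c_j / (c_{\gamma(i,j)} \chi(m_0))$, where $m_0$ is $m$ with $n = n' = 1$. The function $\psi : K_p/N \to W_{(p)}$, $y_i N \mapsto c_i$, is well defined by \eqref{eq:gamma}, and the associated coboundary $\delta_\psi(y_i N, y_j N) = c_i c_j / c_{\gamma(i,j)}$ lies in $\twocobop$. Hence on pairs of the form $(y_i N, y_j N)$ we obtain the factorisation $\widetilde{f}_H(\beta) = \delta_\psi \cdot \chi(m_0)^{-1}$.

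By definition, $\cC(H, \chi) = [\alpha]$ in $\coho{2}(K_p/N)$ is equivalent to $\widetilde{f}_H(\beta) = \alpha \delta'$ for some $\delta' \in \cobo{2}(K_p/N, \C^\times)$. Since both sides take values in $W_{(p)}$, so does $\delta'$, and the identification $\cobo{2}(K_p/N, \C^\times) \cap \twococyp = \twocobop$ established just before the lemma forces $\delta' \in \twocobop$. Setting $\delta := \delta' \delta_\psi^{-1} \in \twocobop$ and substituting gives $\alpha(y_iN,y_jN)\,\delta(y_iN,y_jN) = \chi(m_0)^{-1}$, which is the $(n,n')=(1,1)$ case of \eqref{eq:formula_cohomology}. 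The general case follows from the direct calculation
\[
m = m_0 \cdot \varphi_{y_j t_j}^{-1}(n) \cdot n'
\]
(using $\varphi_j^{-1}(t_i n) = \varphi_j^{-1}(t_i) \varphi_j^{-1}(n)$) combined with the $H$-invariance of $\chi$ (noting that $y_j t_j \in H$ normalises $N \cap H$), which together give $\chi(m) = \chi(m_0)\chi(nn')$. Conversely, specialising \eqref{eq:formula_cohomology} at $n = n' = 1$ recovers $\widetilde{f}_H(\beta) = \alpha \delta \delta_\psi$, whence $[\widetilde{f}_H(\beta)] = [\alpha]$.

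The main care required is the bookkeeping of the auxiliary coboundary $\delta_\psi$, which arises from the arbitrary choice of values $c_i$ taken by $\hat\chi$ on the transversal, and checking at each step that the coboundaries produced lie in $\twocobop$ rather than in the larger group $\cobo{2}(K_p/N, \C^\times)$. This is precisely where the hypothesis that $H$ is pro-$p$ enters, guaranteeing that all projective character values appearing in the computation lie in $W_{(p)}$.
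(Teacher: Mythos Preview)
Your approach is the same as the paper's: compute the factor set of a degree-one strong extension of $\chi$ using the transversal $(y_it_i)$, transfer via $\widetilde{f}_H$, and compare with $\alpha$ up to a coboundary. The paper simplifies by choosing the explicit strong extension $\hat\chi(y_it_in)=\chi(n)$ (that is, $c_i=1$) from the start, which makes your auxiliary coboundary $\delta_\psi$ trivial and removes all the bookkeeping you carry.

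There is one genuine gap. Your claim that $\hat\chi$ automatically takes values in $W_{(p)}$ ``being of degree one and factoring through a finite quotient of the pro-$p$ group $H$'' is false: a degree-one projective character of a finite $p$-group is just a function $P\to\C^\times$ satisfying a cocycle identity, and its values on a transversal can be arbitrary elements of $\C^\times$ (the construction in Theorem~\ref{thm:Clifford-map} imposes no constraint there). This matters because you then assert $\delta_\psi\in\twocobop$ and that $\delta'$ lands in $\twocobop$, both of which need the $c_i$ to lie in $W_{(p)}$. The fix is immediate: since any two strong extensions of $\chi$ differ by a function constant on cosets of $N\cap H$, you may \emph{choose} $c_i=1$ rather than invoke a generic existence theorem; this is exactly what the paper does. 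With that choice your argument is correct, and your observation that $\chi(m)=\chi(m_0)\chi(nn')$ via $H$-invariance (reducing the general case to $n=n'=1$) is a nice simplification that the paper does not make explicit.
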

	\begin{proof}
	We have $\cC(H,\chi)=[\alpha]$ if and only if there exists a strong
	extension $\hat{\chi}\in\PIrr_{\hat{\beta}}(H)$ of $\chi$, with
	$\hat{\beta}\in \cocy{2}(H)$ such that $f_{H}([\beta])=[\alpha]$.
	Since every two strong extensions of $\chi$ to $H$ define the same 
	element 
		\[
			\cC_{H}(\chi)\in \coho{2}(H/(N\cap H)),
		\] 
	we may
	without loss of generality assume that $\hat{\chi}$ is given by
	\begin{equation}
	\label{eq:def_ext}
	\hat{\chi}(y_{i}t_{i}n)=\chi(n),
	\end{equation}
	for all $n\in N\cap H$ and $y_{i}t_{i}$. Thus $\cC(H,\chi)=[\alpha]$
	if and only if there exists $\beta\in \cocy{2}(H/(N\cap H))$
	such that $f_{H}([\beta])=[\alpha]$ and such that for all $n,n'\in N\cap H$
	and all $i,j\in\{1,\dots,\indexPN\}$, 
		\[
		\hat{\chi}(y_{i}t_{i}n y_{j}t_{j}n')\hat{\beta}(y_{i} t_{i} n,y_{j}t_{j}n')=\hat{\chi}(y_{i}t_{i}n)\hat{\chi}(y_{j}t_{j}n').
		\]
	Notice that, by definition, $\hat{\chi}$ has values in $W_{(p)}$. Thus we may strengthen the last equivalence by assuming
	that $\hat{\beta} \in \cocy{2}(H, W_{(p)})$ and consequently $\beta \in \twococyp$.
	The last equation is equivalent to
		\[
		\hat{\chi}(y_{i}t_{i}n y_{j}t_{j}n')\beta(y_{i}t_{i}(N\cap H), y_{j}t_{j}(N\cap H))=\chi(nn').
		\]
	Furthermore, $y_it_i(N\cap H)\subseteq y_iN$, so  $f_{H}([\beta])=[\alpha]$ 
	if and only if there exists a $\delta\in \twocobop$ such that for all $i,j\in\{1,\dots,\indexPN\}$, 
		\[
		\beta(y_{i}t_{i}(N\cap H), y_{j}t_{j}(N\cap H))=\alpha(y_{i}N,y_{j}N)\delta(y_{i}N,y_{j}N).
		\]
	Notice that here we were able to restrict the range for $\delta$ to $\twocobop$, because we could assume 
	that $\beta \in \twococyp$ and we chose $\alpha \in \twococyp$.
	
	Combining these two statements of equivalence we obtain that $\cC(H,\chi)=[\alpha]$
	if and only if there exists $\delta\in \twocobop$ such
	that for all $n,n'\in N\cap H$ and for all $i,j\in\{1,\dots,\indexPN\}$, 
		\[
		\hat{\chi}(nt_{i}y_{i}n't_{j}y_{j})\alpha(y_{i}N,y_{j}N)\delta(y_{i}N,y_{j}N) = \chi(nn').
		\]
	Hence, to finish the proof, we need to show that
		\[
		\hat{\chi}(nt_{i}y_{i}n't_{j}y_{j})
		= \chi(t_{\gamma(i,j)}^{-1}a_{ij}\varphi_{j}^{-1}(t_{i}n)t_{j}n').
		\]
	Indeed, this follows from \eqref{eq:def_ext} and the identities 
    \begin{align*}	  
y_{i}t_{i}ny_{j}t_{j}n' & =y_{i}y_{j}y_{j}^{-1}t_{i}ny_{j}t_{j}n
  =y_{i}y_{j}\varphi_{j}^{-1}(t_{i}n)t_{j}n'\\
 & =y_{\gamma(i,j)}a_{ij}\varphi_{j}^{-1}(t_{i}n)t_{j}n' =y_{\gamma(i,j)}t_{\gamma(i,j)}t_{\gamma(i,j)}^{-1}a_{ij}\varphi_{j}^{-1}(t_{i}n)t_{j}n',
   \end{align*}
noting that $t_{\gamma(i,j)}^{-1}a_{ij}\varphi_{j}^{-1}(t_{i}n)t_{j}n'$ 
lies in $H$ (since $y_{i}t_{i}ny_{j}t_{j}n'$ and $y_{\gamma(i,j)}t_{\gamma(i,j)}$ do), and therefore in $N\cap H$. 
   \end{proof}
\subsection{Definable sets for $\twococyp$ and $\twocobop$.}
We will now introduce the definable sets that will be used to interpret predicates 
quantifying over $\twococyp$ and $\twocobop$.
\begin{rem}
\label{ass:prufer}
It is well-known that $\Q_p / \Z_p$ is isomorphic 
to   $W_{(p)}$ via the map $\iota: a/p^m+\Z_p \mapsto e^{2\pi i a/p^m}$ 
(cf.~\cite[Lemma~8.7]{hrumar2015definable}). 
\end{rem}
	\begin{lem}
	\label{lem:schur_def_sets}
	Define $\mathcal{Z}$ and $\mathcal{B}$ to be the sets of matrices $(z_{ij}) \in {\M_{\indexPN}(\Q_p)}$ such that the map
	    \[
	        ( y_iN, y_jN)
							\longmapsto \iota(z_{ij} + \Z_p), \quad \text{for}\  {i,j\in \lbrace 1, \dots, \indexPN \rbrace}
		\]
	is in $\twococyp$ and $\twocobop$ respectively. Then $\mathcal{Z}$ and $\mathcal{B}$ are definable subsets of $\Q_p^{r^2}$ in 
	$\man$.
	\end{lem}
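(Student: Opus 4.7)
The plan is to translate the $2$-cocycle and $2$-coboundary conditions (with values in $W_{(p)}$ and with trivial $K_p/N$-action) into additive conditions modulo $\Z_p$ via the isomorphism $\iota : \Q_p/\Z_p \to W_{(p)}$ of Remark~\ref{ass:prufer}, and then observe that each such condition is first-order definable in $\man$.

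First I would record that $\Z_p$ is a definable subset of $\Q_p$: the $\Lan$-formula
\[
(\mathrm{ord}(x) \geq 0) \vee (x = 0)
\]
defines it, using the function symbol $\mathrm{ord}$ and the order $<$ on the value group sort. Consequently, for any $\Lan$-term $t(z_{11}, \dots, z_{\indexPN \indexPN})$ built from the $z_{ij}$ using the ring operations, the condition $t \in \Z_p$ is an $\Lan$-condition on the matrix $(z_{ij})$.

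Next, for $\mathcal{Z}$: unwinding the $2$-cocycle identity $\alpha(a,b)\alpha(ab,c) = \alpha(a,bc)\alpha(b,c)$ (trivial action) with $a = y_iN$, $b = y_jN$, $c = y_kN$, and transporting through $\iota$ to $\Q_p/\Z_p$, the matrix $(z_{ij}) \in \M_{\indexPN}(\Q_p)$ lies in $\mathcal{Z}$ if and only if
\[
z_{ij} + z_{\gamma(i,j),k} - z_{i,\gamma(j,k)} - z_{j,k} \in \Z_p
\]
for all $i,j,k \in \{1,\dots,\indexPN\}$. Since $\gamma$ is a fixed map on a finite index set and $\{y_1N, \dots, y_{\indexPN}N\}$ exhausts $K_p/N$, this is a finite conjunction of $\Lan$-conditions of the kind above, so $\mathcal{Z}$ is definable.

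Finally, for $\mathcal{B}$: the coboundary condition asserts the existence of a function $\varphi : K_p/N \to W_{(p)}$ with $\alpha(a,b) = \varphi(a)\varphi(b)\varphi(ab)^{-1}$. Since $\Q_p \to \Q_p/\Z_p$ is surjective, any such $\varphi$ lifts to values $x_i \in \Q_p$ for $i = 1, \dots, \indexPN$, and the condition becomes
\[
z_{ij} - x_i - x_j + x_{\gamma(i,j)} \in \Z_p \qquad \text{for all } i,j \in \{1,\dots,\indexPN\}.
\]
Thus $\mathcal{B}$ is defined by the existentially quantified $\Lan$-formula $\exists x_1, \dots, x_{\indexPN} \in \Q_p$ satisfying this finite conjunction, and is therefore definable. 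There is no real obstacle here; the argument is bookkeeping relying on the identification $W_{(p)} \cong \Q_p/\Z_p$, the triviality of the action, the definability of $\Z_p$, and the finiteness of the index set on which $\gamma$ is defined.
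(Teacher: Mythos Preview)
Your proof is correct and follows essentially the same approach as the paper: both translate the cocycle and coboundary identities into congruences modulo $\Z_p$ via $\iota$, note that $\Z_p$ is definable, and conclude by observing that the resulting conditions are finite conjunctions (with an existential quantifier in the coboundary case) of $\Lan$-formulas. The only cosmetic difference is that you spell out the formula defining $\Z_p$ explicitly, whereas the paper simply asserts its definability.
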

	\begin{proof} 
	Let $(z_{ij}) \in \M_{\indexPN}(\Q_p)$ and let $\alpha$ be the the map 
	$K_p/\uniform \times K_p/\uniform \to \Q_p/\Z_p$ defined as
	    \[
	        \alpha( y_iN, y_jN)
							\longmapsto \iota( z_{ij} + \Z_p), \qquad i,j \in \lbrace 1, \dots, \indexPN \rbrace.
	    \] 
	Imposing that $\alpha$ satisfy the $2$-cocycle identity, we obtain that $(z_{ij}) \in \mathcal{Z}$ 
	if and only if for all $i,j,k \in \lbrace 1, \dots, \indexPN \rbrace$, 
		\[
			z_{\gamma(i,j)\, k} + z_{ij} = z_{i \, \gamma(j,k)} +  z_{jk} \mod \Z_p,
		\]
	where  $\gamma$ is as defined in \eqref{eq:gamma}.
	Notice that $\Z_p$ is definable in $\man$, 
	hence equivalence modulo $\Z_p$ is a definable relation. It follows that the set $\mathcal{Z}$ 
	is definable in $\man$.\par
	The set $\mathcal{B}$  is also definable in $\man$. Indeed, $\delta \in  \twocobop$ if and only if 
		\[
    			 \delta(x,y) = \varphi(x) \varphi(y)  \varphi(xy)^{-1},
		\]
	for some function $\varphi: K_p/\uniform \rightarrow \Q_p/\Z_p$. 
	We parametrise the functions 
	$K_p/N\rightarrow \Q_p$ by the $\indexPN$-tuples 
	of their values on $y_1N,\dots, y_{\indexPN}N$. In these coordinates, 
	we obtain that $\alpha \in \twocobop$ if and only if there are $b_1, \dots, \allowbreak b_\indexPN \in \Q_p$ 
	with the property that for all $i,j \in \lbrace 1, \dots, \indexPN \rbrace$,
		\[
			z_{ij} = b_i + b_j - b_{\gamma(i,j)}      \mod	\Z_p.
		\]
	This is a definable predicate in $\man$ so $\mathcal{B}$  is definable in $\man$.
	\end{proof}
\subsection{Definability of the fibres of $\cC$}
We now find a definable parametrisation of the fibres of $\cC$ in Corollary~\ref{cor:surj_coho}.
We need the following lemma to definably express $K$-stability of characters by an $\Lan$-formula.
	\begin{lem}
	\label{lem:conj_induced}
		Let $M$ be a finite index subgroup of $N$ and $\chi$ be a character of $M$ of degree one. 
		Then, for all $g \in G$,
			\[
				\leftexp{g}{\big(\mathrm{Ind}_M^N \chi\big)} = \mathrm{Ind}_{\leftexp{g}{M}}^N\leftexp{g}{\chi}.
			\]
		Moreover if $M'$ is another finite index subgroup of $N$ and $\chi, \chi'$ are degree one characters 
		of $M$ and $M'$ respectively, such that $\Ind_M^N \chi$ and $\Ind_{M'}^N \chi'$ are irreducible, then 
		$\Ind_M^N \chi = \Ind_{M'}^N \chi'$ if and only if there exists $g \in N$ such that 
		$\Res^{\leftexp{g}{M}}_{\leftexp{g}{M} \cap M'} \leftexp{g}{\chi} = \Res^{M'}_{\leftexp{g}{M} \cap M'} \chi'$.
	\end{lem}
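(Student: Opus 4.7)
My plan is to treat the two parts separately, each by a standard character-theoretic computation.

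For part one, the plan is to write out the explicit Frobenius formula for the induced character,
\[
\Ind_M^N \chi(n) = \frac{1}{|M|}\sum_{\substack{x \in N \\ xnx^{-1} \in M}} \chi(xnx^{-1}),
\]
then apply it to $g^{-1}ng$ to compute $\leftexp{g}{(\Ind_M^N \chi)}(n)$, and reindex via the substitution $y = xg^{-1}$. Since $N$ is normal in $G$ (this is part of the standing assumption), we have $\leftexp{g}{M} \leq N$, and the reindexed sum is precisely the Frobenius formula for $\Ind_{\leftexp{g}{M}}^N \leftexp{g}{\chi}$ evaluated at $n$. The reduction to a finite quotient (justifying the use of the discrete formula in the profinite setting) is immediate because $M$ has finite index in $N$ and $\chi$ factors through a finite quotient.

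For part two, the plan is to combine irreducibility with Mackey's intertwining number formula. Since both $\Ind_M^N \chi$ and $\Ind_{M'}^N \chi'$ are irreducible, they coincide if and only if $\langle \Ind_M^N \chi, \Ind_{M'}^N \chi'\rangle_N \neq 0$. Mackey's formula rewrites this inner product as
\[
\sum_{g \in M' \backslash N / M}\bigl\langle \Res^{\leftexp{g}{M}}_{\leftexp{g}{M} \cap M'} \leftexp{g}{\chi},\; \Res^{M'}_{\leftexp{g}{M} \cap M'} \chi'\bigr\rangle_{\leftexp{g}{M} \cap M'}.
\]
Because $\chi$ and $\chi'$ have degree one, each restriction appearing is itself a one-dimensional character of $\leftexp{g}{M} \cap M'$, so every summand is either $0$ or $1$. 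Hence the total is non-zero precisely when some double-coset representative $g \in N$ satisfies $\Res^{\leftexp{g}{M}}_{\leftexp{g}{M} \cap M'} \leftexp{g}{\chi} = \Res^{M'}_{\leftexp{g}{M} \cap M'} \chi'$, yielding the claimed equivalence (the ``$g \in N$'' in the statement being simply a choice of double-coset representative, and using part one to handle the conjugation).

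There is no serious obstacle: both parts rely on standard tools (the formula for conjugation of induced characters, and the Mackey intertwining formula together with Frobenius reciprocity, all of which are cited earlier for projective characters in Section~\ref{sec:Prel on proj reps} and hold a fortiori for ordinary characters). The only point worth highlighting is that the degree-one hypothesis is what allows us to upgrade ``inner product non-zero'' to ``restrictions literally coincide'', since two one-dimensional characters of the same group have inner product $0$ or $1$.
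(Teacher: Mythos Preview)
Your proposal is correct and follows essentially the same approach as the paper: the paper's proof simply says that the first statement is a routine check using the induced character formula and that the second follows from Mackey's theorem (citing \cite[Proposition~8.6\,(c)]{hrumar2015definable}), which is precisely the argument you have spelled out in detail, including the key observation that degree-one restrictions have inner product $0$ or $1$.
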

		\begin{proof}
		The proof of the first statement is a routine check using the formula for an induced character.
		The second statement follows from Mackey's theorem 
		(cf.\ \cite[Proposition~8.6\,(c)]{hrumar2015definable}).
		\end{proof}
We are ready to construct the definable set parametrising $\cC^{-1}(c)\subseteq X_K$. Let from now on $n_1, \dots, n_d \in N$ be a minimal 
topological generating set for $N$.

     \begin{prop}
	\label{pro:X_definable}
	Let $c \in \coho{2}(K_p/ \uniform)$ and let $\cD^c$	 be the set of pairs 
	$(\tuple{\lambda}, \tuple{\xi})$, $\tuple{\lambda}\in \M_{d\times (d + \indexPN)}(\Z_p)$, $\tuple{\xi}=(\xi_1,\dots,\xi_d)\in \Q_p^{d}$  such that:
		\begin{enumerate}
			\item \label{pro:X_definable_1} the columns of $\tuple{\lambda}$ are the $\Z_p$-coordinates with respect to 
			$n_1, \dots, n_d$ of a basis $(h_1,\dots, h_d, t_1,\dots, t_\indexPN)$ for some subgroup 
			$H \in \cH(K_p)$.
			\item \label{pro:X_definable_2} The function $\{h_1,\dots, h_d\} \rightarrow \Q_p/\Z_p$, $h_i\mapsto \xi_i + \Z_p$, 
			extends to a (necessarily unique) continuous $H$-invariant homomorphism 
				\[
					\chi: N \cap H\longrightarrow \Q_p/\Z_p.
				\]
			\item \label{pro:X_definable_3} $\Ind_{N\cap H}^N (\iota \circ \chi) \in 
			\Irr_\stgroup(\uniform)$,
			\item \label{pro:X_definable_4} $\cC(H, (\iota \circ \chi)) = c$.
		\end{enumerate}
	Then $\cD^c$ is a definable subset of $\Q_p^{d(d+r+1)}$
	 in $\man$.
	\end{prop}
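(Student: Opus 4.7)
The plan is to show that each of the four defining conditions \emph{i)}--\emph{iv)} of $\cD^c$ is separately definable in $\man$ and hence, by intersection, so is $\cD^c$. Throughout I would work via the definable interpretation of $\struc_N$ in $\man$ (Lemma~\ref{lem:int_M_N}) together with the $\Z_p$-coordinatization $(\lambda_1,\dots,\lambda_d) \leftrightarrow n_1^{\lambda_1}\cdots n_d^{\lambda_d}$ of $N$. Condition \emph{i)} is essentially immediate: by \cite[Lemma~2.12]{duSautoy-rationality} the set of bases for subgroups $H\in \cH(K_p)$ is definable in $\struc_N$, and hence, after passing to $\Z_p$-coordinates via Lemma~\ref{lem:int_M_N}, corresponds to a definable subset of $\Z_p^{d(d+\indexPN)}\subseteq\Q_p^{d(d+\indexPN)}$ in $\man$.

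For condition \emph{ii)}, fix a tuple $\tuple{\lambda}$ as in \emph{i)}, so that we have a basis $(h_1,\ldots,h_d,t_1,\ldots,t_\indexPN)$ for some $H\in\cH(K_p)$. By \cite[Lemma~2.4\,(i)]{duSautoy-rationality}, every $h\in N\cap H$ has a unique expression $h=h_1^{\mu_1}\cdots h_d^{\mu_d}$ with $\mu_i\in\Z_p$, and the map $h\mapsto(\mu_1,\ldots,\mu_d)$ is definable in $\struc_N$ (hence in $\man$). I would then consider the \emph{candidate} map $\widetilde{\chi}:N\cap H\to\Q_p$ defined by $\widetilde{\chi}(h)=\sum_i\mu_i\xi_i$; this is definable. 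The existence (and uniqueness) of a continuous $H$-invariant homomorphism $\chi:N\cap H\to\Q_p/\Z_p$ with $\chi(h_i)=\xi_i+\Z_p$ is then equivalent to the two definable universal conditions
\begin{equation*}
\widetilde{\chi}(nn')-\widetilde{\chi}(n)-\widetilde{\chi}(n')\in\Z_p\quad\text{and}\quad \widetilde{\chi}(\varphi_{y_it_i}(n))-\widetilde{\chi}(n)\in\Z_p
\end{equation*}
for all $n,n'\in N\cap H$ and all $i\in\{1,\dots,\indexPN\}$, using that conjugation by $y_it_i$ is a definable operation via the symbols $\varphi_g$ of Definition~\ref{def:L_G} and that $\Z_p\subseteq\Q_p$ is definable in $\man$.

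Condition \emph{iii)} is the most delicate, and I expect it to be the main obstacle. We must express both the irreducibility of $\Ind_{N\cap H}^N(\iota\circ\chi)$ and that its $G$-stabiliser equals $K$. For irreducibility, Mackey's criterion says $\Ind_{N\cap H}^N(\iota\circ\chi)$ is irreducible iff for every $g\in N\setminus(N\cap H)$ there exists $n\in(N\cap H)\cap\leftexp{g}{(N\cap H)}$ with $\leftexp{g}{\chi}(n)\neq\chi(n)$; this is a definable condition since intersections of subgroups expressed through good bases and the conjugation action $\varphi_g$ are definable. For the stabiliser condition, since $G/N$ is finite, fix sets of coset representatives $k_1,\ldots,k_s$ of $N$ in $K$ and $g_1,\ldots,g_m$ of $K$ in $G$ with $g_1\notin K$ etc.; then $\Stab_G(\Ind_{N\cap H}^N(\iota\circ\chi))=K$ is equivalent to $\Ind_{\leftexp{k_i}{(N\cap H)}}^N\leftexp{k_i}{(\iota\circ\chi)}=\Ind_{N\cap H}^N(\iota\circ\chi)$ for each $i$ and its negation for each $g_j$. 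By Lemma~\ref{lem:conj_induced}, each such equality reduces to the existence of some $n\in N$ for which the restrictions of the conjugate degree-one characters to an intersection subgroup coincide, which is a definable condition on $(\tuple{\lambda},\tuple{\xi})$ in $\man$.

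Finally, condition \emph{iv)} is handled by combining Lemma~\ref{lem:first_o_formula_cohomology} with Lemma~\ref{lem:schur_def_sets}. Fix once and for all a cocycle representative $\alpha\in\twococyp$ of $c$; then $\cC(H,\iota\circ\chi)=c$ is equivalent to the existence of $\delta\in\twocobop$ such that \eqref{eq:formula_cohomology} holds for all $n,n'\in N\cap H$ and all $i,j\in\{1,\dots,\indexPN\}$. By Lemma~\ref{lem:schur_def_sets}, $\twococyp$ and $\twocobop$ are identified, via the isomorphism $\Q_p/\Z_p\cong W_{(p)}$ of Remark~\ref{ass:prufer}, with definable subsets of $\Q_p^{\indexPN^2}$, and each term of \eqref{eq:formula_cohomology} is obtained from $\widetilde{\chi}$, the $\Z_p$-coordinate expression of $t_{\gamma(i,j)}^{-1}a_{ij}\varphi_j^{-1}(t_in)t_jn'$ (a definable element of $N\cap H$), and from $\alpha$ and $\delta$, so this yields a definable existential condition. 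Intersecting the four definable sets gives $\cD^c$ as a definable subset of $\Q_p^{d(d+\indexPN+1)}$, completing the proof.
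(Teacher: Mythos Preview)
Your proof is correct, and for conditions \emph{i)}, \emph{iii)}, and \emph{iv)} it proceeds essentially as the paper does (same references, same use of Mackey's criterion, Lemma~\ref{lem:conj_induced}, Lemma~\ref{lem:first_o_formula_cohomology} and Lemma~\ref{lem:schur_def_sets}). For condition \emph{ii)}, however, you take a genuinely different route. The paper, following \cite[Lemma~8.8]{hrumar2015definable}, encodes the existence of $\chi$ by the existence of a good basis for a finite-index normal subgroup $M\trianglelefteq N\cap H$ together with an element $h\in N\cap H$ generating the cyclic quotient $(N\cap H)/M$ and a $\xi\in\Q_p$ whose order in $\Q_p/\Z_p$ equals $|N\cap H:M|$, recovering $\chi$ as $h^{r}M\mapsto r\xi+\Z_p$. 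Your approach instead lifts $\chi$ directly to the definable function $\widetilde{\chi}(h)=\sum_i\mu_i\xi_i$ through good-basis coordinates and imposes the homomorphism and $H$-invariance conditions as congruences modulo $\Z_p$. This is more streamlined: it avoids introducing $M$ and the cyclic generator, and it produces a single definable term for $\chi(h)$ that can be reused verbatim in \emph{iii)} and \emph{iv)}. One point you should make explicit is why your equivalence respects continuity: the map $(\mu_1,\dots,\mu_d)\mapsto h_1^{\mu_1}\cdots h_d^{\mu_d}$ is a continuous bijection of compact Hausdorff spaces, hence a homeomorphism, so $\widetilde{\chi}$ (and thus $\chi=\widetilde{\chi}+\Z_p$) is automatically continuous; conversely, given a continuous $\chi$, the identity $\chi(h_i^{\mu})=\mu\xi_i+\Z_p$ passes from $\mu\in\Z$ to $\mu\in\Z_p$ by density, which is what justifies $\chi=\widetilde{\chi}+\Z_p$.
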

		\begin{proof}
		Condition \ref{pro:X_definable_1} is expressible by an $\Lan$-formula by 
		\cite[Lemma~2.12]{duSautoy-rationality}. Following  the proof of 
		\cite[Lemma~8.8]{hrumar2015definable}, we show that if  \ref{pro:X_definable_1}
		holds, then \ref{pro:X_definable_2} holds if and only if:
\renewcommand{\theenumi}{\emph{\alph{enumi})}}  
		\begin{enumerate}
			\item \label{pro:X_equivalent_1}
				there exists $(\mu_{ij}) \in \M_{d}(\Z_p)$ 
				whose columns are the $\Z_p$-coordinates of a good basis for some finite index 
				normal subgroup $M$ of $N \cap H$;
			\item \label{pro:X_equivalent_3}
				there exist $\xi \in \Q_p$, $r_1, \dots, r_d \in \Z_p$, and $h \in N\cap H$ such 
			that the order of $\xi$ in $\Q_p/\Z_p$ is $\lvert N\cap H : M\rvert$ and for every 
			$i,j \in \lbrace 1,\dots, d\rbrace$,
				\begin{align*}
					 h_j		    &=\leftexp{t_i^{-1}}{\varphi_i^{-1}(h^{r_j})} \mod M	&
					 & \text{and}&
					r_i \xi		 &= \xi_i \mod \Z_p.
				\end{align*}
		\end{enumerate}\par
\renewcommand{\theenumi}{\emph{\roman{enumi})}}
		Suppose that conditions \ref{pro:X_definable_1} and \ref{pro:X_definable_2} in the statement hold. Then 
		$\chi$ factors through a finite quotient of $N \cap H$. Set $M = \Ker \chi$ and choose
		$(\mu_{ij}) \in \M_{d}(\Z_p)$ such that its 
		columns are the $\Z_p$-coordinates of a good basis of $M$.
		Condition \ref{pro:X_equivalent_1} is immediately satisfied. 
		Moreover the group $(N\cap H)/M$ is cyclic, because it is isomorphic to a subgroup of $\C^{\times}$.
		This, together with the $H$-invariance of $\chi$, implies condition \ref{pro:X_equivalent_3} for $h \in N\cap H$ 
		such that $(N\cap H)/M = \langle h M \rangle$, $\xi \coloneqq \chi (h)$ and $r_1, \dots, r_d \in \Z$ such that, for 
		$i \in \lbrace 1, \dots, d\rbrace$, $h_i M = h^{r_i} M$.\par
		Conversely, assume there are  $(\mu_{ij})\in \M_{d}(\Z_p)$, $h \in H$, and $\xi \in \Q_p$
		such that \ref{pro:X_equivalent_1} and \ref{pro:X_equivalent_3}
		hold. We define a continuous homomorphism $\chi: N \cap H\rightarrow \Q_p/\Z_p$ as follows.
		By \cite[Lemma~1.19]{duSautoy-rationality} the map $\Z_p\rightarrow N\cap H$ defined by 
		$\lambda \mapsto h^{\lambda}$ is analytic in the $\Z_p$-coordinates of $N$ 
		and therefore it is continuous. Since $M$ is an open subgroup, we may find a neighbourhood 
		of $U$ of $0$ such that $h^\lambda \in M$ for all $\lambda \in U$. Now, $\Z$ is dense in $\Z_p$, 
		so, for all $i \in \lbrace 1,\dots, d\rbrace$, we may find $s_i \in (r_i + U) \cap \Z$. Clearly, since 
		$s_i \in r_i + U$, we have 
			\[
				 h^{s_i} M = h^{r_i} M = h_i M,
			\]
		showing that $(N\cap H) / M$ is cyclic with generator $h M$.\par
		By assumption, the order of $\xi + \Z_p$ in $\Q_p/\Z_p$ is equal to the order of $h M$ in $(N \cap H)/M$, thus 
		there is an injective homomorphism 
			\[
				\beta:  (N \cap H)/M\rightarrow \Q_p/\Z_p \text{ defined by } h M \mapsto \xi + \Z_p.
			\] 
		We define $\chi: N \cap H\rightarrow \Q_p/\Z_p$ to be the composition 
		of $\beta$ with the canonical map $N \cap H \rightarrow (N \cap H)/M$. The latter is continuous 
		by \cite[Proposition~1.2]{DdSMS}, so $\chi$ is a continuous homomorphism. Since $y_1 = 1$ by assumption, 
		$t_1 \in N \cap H$. So, for all $j \in \lbrace 1, \dots, d\rbrace$, 
			\[
				\chi(h_j) = \chi( \leftexp{t_1^{-1}}{h^{r_j}}) = r_j \xi + \Z_p= \xi_j + \Z_p.
			\] 
		Similarly, for $i, j \in \lbrace 1, \dots, d\rbrace$, we have 
		$\chi(\leftexp{t_i^{-1}}{\varphi_i^{-1}(h_j)}) = \xi_j + \Z_p$ showing that $\chi$ is $H$-invariant.\par
		Conditions \ref{pro:X_equivalent_1} and \ref{pro:X_equivalent_3} become $\Lan$-formulas
		by passing to $\Z_p$-coordinates with respect to $n_1, \dots, n_d$ and via the interpretation 
		of $\struc_N$ in $\man$ of Lemma~\ref{lem:int_M_N}. Notice that membership in $N \cap H$ 
		can be expressed by means of the $\Z_p$-coordinates of $N$ because we assumed that 
		$h_1, \dots, h_d$ is a good basis by \ref{pro:X_definable_1}. Moreover, equivalence modulo 
		$M$ is definable in $\man$, as we have a good basis for $M$. Finally, the condition on the order 
		of $\xi$ is equivalent to 
			\[
				\left(h^{(\xi^{-1})} \in M\right) \wedge 
						\left(\forall\, \eta \in \Q_p \, (\ord(\eta) > \ord (\xi)
						\Rightarrow  h^{(\eta^{-1})} \notin M)\right).
			\]\par
		We now show that condition \ref{pro:X_definable_3} is definable. 
		To simplify notation we will throughout the rest of the proof identify the group $\Q_p/ \Z_p$ with $W_{(p)}$ through $\iota$. 
		Under this identification, we re-define $\chi = \iota \circ \chi$.\par
		First we show that the irreducibility 
		of $\mathrm{Ind}_{N\cap H}^N \chi$ is expressible as an $\Lan$-formula. Indeed, by Mackey's 
		irreducibility criterion $\Ind_{N\cap H}^N \chi$ is irreducible if and only if 
			\[
				 \forall\, g\in N\ : \Big(\big(\forall\, h\in N\cap H,\,\chi(\leftexp{g}{h})
										= \chi(h)\big)\Longrightarrow g\in H\Big).
			\]
		By \ref{pro:X_definable_1} and \ref{pro:X_definable_2} we may rewrite the formula above in terms of 
		$\tuple{\lambda}$, $\tuple{\xi}$ and of the $\Z_p$-coordinates in $N$. By Lemma~\ref{lem:int_M_N}, this gives 
		an $\Lan$-formula for the irreducibility statement in condition \ref{pro:X_definable_3}. 
		To conclude the proof that this condition gives rise to a definable set, we show that $K$-invariance can also 
		be expressed by an $\Lan$-formula. 
		Indeed, let $\indexKN = \lvert K : N \rvert$ and 
		$\indexGN = \lvert G : N \rvert$. Fix $y_{\indexKpN + 1}, \dots, y_{\indexGN} \in G$ such that 
		$(y_1, \dots, y_\indexKN)$ and $(y_1, \dots, y_\indexGN)$ are left transversals of $N$ in $K$ and 
		$G$ respectively. Recall that, for $g\in G$, we denote by $\varphi_g$ the 
		conjugation by $g$ on $N$. Let 
			\begin{align*}
				C_K     & = \lbrace \varphi_{y_i} \mid i \in \lbrace 1, \dots, \indexKN\rbrace \rbrace, &
				C_G     &= \lbrace \varphi_{y_i} \mid i \in \lbrace 1, \dots, \indexGN \rbrace \rbrace.
			\end{align*}
		Notice that $C_K \subseteq C_G$. By Lemma~\ref{lem:conj_induced}, 
		the stabiliser of  $\Ind_{N\cap H}^{N}\chi$ is equal to $K$ if and only the following statement holds:
		\begin{equation}
		\label{eq:stab_is_K}
			\forall\, \varphi \in C_G\ : \Big(\Ind_{N\cap H}^{N}\chi=\Ind_{\varphi (N\cap H)}^{N}\chi \circ\varphi^{-1} 
			\Longleftrightarrow \varphi \in C_K \Big).
		\end{equation}
		Fix
		$i\in\lbrace 1,\dots, \indexGN \rbrace$. Lemma~\ref{lem:conj_induced} with 
		$M = N \cap H$, $M' = \leftexp{y_i}{(N \cap H})$ and $\chi' = \leftexp{y_i}{\chi}$ implies that 
		$\Ind_{N\cap H}^{N}\chi=\Ind_{\varphi_{y_i}(N\cap H)}^{N}\chi \circ\varphi_{y_i}^{-1}$ if and only if
			\[
				\exists\, g \in N,\ \forall\, h\in N\cap H\ : \big(\leftexp{g}{h} \in \leftexp{y_i}{(N \cap H}) \Longrightarrow \chi(h) 
						= \leftexp{y_i}{\chi}(\leftexp{g}{h})\big).
			\]
		Again, by  \ref{pro:X_definable_1} and \ref{pro:X_definable_2}, we may write the latter in terms of
		$\tuple{\lambda}$, $\tuple{\xi}$ and of the $\Z_p$-coordinates in $N$. Substituting in \eqref{eq:stab_is_K} finishes the 
		proof that 
		condition \ref{pro:X_definable_3}
		is definable. Notice that we are allowed to conjugate elements of 
		$N$ by elements of $G$ because there are corresponding function symbols $\varphi_g$ in $\lan_N$ (and these
		are interpreted as definable functions in $\man$ by Lemma~\ref{lem:int_M_N}).\par
		Finally we show that also \ref{pro:X_definable_4} can be expressed by 
		an $\Lan$-formula. Fix $\beta \in \twococyp$ such that $[\beta] = c$. By Lemma~\ref{lem:first_o_formula_cohomology}, 
		condition \ref{pro:X_definable_4} is equivalent to
			\begin{multline}
			\label{eq:coho_is_c}
				\exists\, \delta \in \twocobop\ :
				\Bigg(\bigwedge_{i,j\in\{1,\dots,\indexPN\}}
					\forall\, n, n' \in N \cap H\\
						\Big(
	       \chi(t_{\gamma(i,j)}^{-1}a_{ij}\varphi_{j}^{-1}(t_{i}n)t_{j}n') \beta(y_{i}N, y_{j}N) \delta(y_{i}N, y_{j}N)= \chi(nn').
						\Big)\Bigg).
			\end{multline}
		We describe how the above 		
		is translated to an $\Lan$-formula. First, the multiplications by $a_{ij}, t_i, t_j$ etc.\ 
		are analytic functions $N \to N$, which have corresponding expressions in $\Lan$. Secondly, 
		we need to choose a $(b_{ij}) \in \cZ$ corresponding to $\beta$ and replace $\beta(y_{i}N,y_{j}N)$ by $b_{ij}$. In general, 
		this could cause the final $\Lan$-formula to have parameters in $\Q_p$, which we might not be able to eliminate. 
		However, every class in $\Q_p/\Z_p$ has a representative in $\Q$. If a rational representative is chosen for 
		each $\beta(y_{i}N,y_{j}N)$ and we multiply every equation in the conjunction by a suitable power of $p$, then we may assume the $b_{ij}$'s are  
		parameters in $\Z$, which (in the final $\Lan$-formula) we can replace with $\Lan$-expressions evaluating to them in $\man$ 
		(e.g.\ $b\in \Z_{\geq 0}$ is replaced by $1 + \cdots + 1$ where $1$ appears $b$ times).\par	
		Next, we replace 
		$(\exists \delta \in \twocobop)$ by $(\exists  (d_{ij}) \in \cB)$ where $\cB$ is as in Lemma~\ref{lem:schur_def_sets},
		and we replace $\delta(y_{i}N,y_{j}N)$ by $d_{ij}$. 
		Finally, by \ref{pro:X_definable_1} and \ref{pro:X_definable_2} again, 
		we write the equations in \eqref{eq:coho_is_c} as congruences $\bmod\, \Z_p$ with variables
		$\tuple{\lambda}$, $\tuple{\xi}$, the $\Z_p$-coordinates of $n$ and $n'$, and $d_{ij}$. This gives the 
		required $\Lan$-formula. 
		\end{proof}
Proposition~\ref{pro:X_definable} shows that 
there is a surjective map $\Psi: \cD^c \rightarrow \cC^{-1}(c)$ defined by 
$(\tuple{\lambda}, \tuple{\xi})\mapsto (H, \chi)$ where $H\in\cH(K_p)$ is the subgroup corresponding to 
the basis $(h_1,\dots, h_d, t_1,\dots, t_\indexPN)$ of Proposition~\ref{pro:X_definable}~\ref{pro:X_definable_1}
and $\chi$ is as in Proposition~\ref{pro:X_definable}~\ref{pro:X_definable_2}.
\subsection{Finishing the proof of Theorem~\ref{thm:Main}}
We write the partial zeta series as a generating function enumerating the equivalence classes of a family 
of definable equivalence relations. We conclude rationality of the partial zeta series
by Theorem~\ref{thm:rational_series}. Theorem~\ref{thm:Main} then follows from 
Proposition~\ref{prop:partial-Main}.\par
We start by constructing a definable equivalence 
relation on $\cD^c$ whose equivalence classes will be in bijection with 
$\Irr_K^{c}(N)$. 
Let $(\tuple{\lambda}, \tuple{\xi}), (\tuple{\lambda}', \tuple{\xi}') \in \cD^c$ and
let $(H, \chi) = \Psi (\tuple{\lambda}, \tuple{\xi})$ and $(H', \chi') = \Psi (\tuple{\lambda}', \tuple{\xi}')$. We define an 
equivalence relation $\eqrel$ on $\cD^c$ by 
	\[
		((\tuple{\lambda}, \tuple{\xi}), (\tuple{\lambda}', \tuple{\xi}')) \in \eqrel  \Longleftrightarrow 
									\mathrm{Ind}_{N\cap H}^N \chi = \mathrm{Ind}_{N\cap H'}^N \chi'.
	\]
	\begin{lem}
	The relation $\eqrel$ is definable in $\man$.
	\end{lem}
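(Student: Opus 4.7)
The plan is to apply the second part of Lemma~\ref{lem:conj_induced} to reduce the equality of the two induced characters to a definable condition that uses only the ingredients already shown to be $\Lan$-definable in the proof of Proposition~\ref{pro:X_definable}. Let $(H,\chi) = \Psi(\tuple{\lambda}, \tuple{\xi})$ and $(H', \chi') = \Psi(\tuple{\lambda}', \tuple{\xi}')$. By condition~\ref{pro:X_definable_3} of Proposition~\ref{pro:X_definable}, both $\Ind_{N \cap H}^N \chi$ and $\Ind_{N \cap H'}^N \chi'$ are irreducible. Hence Lemma~\ref{lem:conj_induced} yields that $\Ind_{N \cap H}^N \chi = \Ind_{N \cap H'}^N \chi'$ is equivalent to
\[
\exists\, g \in N\ :\ \forall\, h \in N \cap H\ :\ \big(\leftexp{g}{h} \in N \cap H' \Longrightarrow \chi(h) = \chi'(\leftexp{g}{h})\big),
\]
using that $\leftexp{g}{\chi}(\leftexp{g}{h}) = \chi(h)$.

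Next I would verify that each constituent of this formula is $\Lan$-definable in $\man$ in terms of the parameters $(\tuple{\lambda}, \tuple{\xi}, \tuple{\lambda}', \tuple{\xi}')$. Quantification over $g, h \in N$ is legitimate after passing to $\Z_p$-coordinates with respect to $n_1, \ldots, n_d$, as in Lemma~\ref{lem:int_M_N}. Membership in $N \cap H$ (resp.~$N \cap H'$) is $\Lan$-definable from $\tuple{\lambda}$ (resp.~$\tuple{\lambda}'$) using good bases, by exactly the argument already used for condition~\ref{pro:X_definable_1} of Proposition~\ref{pro:X_definable}. The inner conjugation $h \mapsto \leftexp{g}{h}$ is definable in $\struc_N$ via the group operations, and hence in $\man$ by Lemma~\ref{lem:int_M_N}.

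Finally, the values $\chi(h)$ and $\chi'(\leftexp{g}{h})$ are handled exactly as in the treatment of condition~\ref{pro:X_definable_2} in the proof of Proposition~\ref{pro:X_definable}. Writing $h = h_1^{\mu_1} \cdots h_d^{\mu_d}$ with uniquely determined $\mu_i \in \Z_p$, where $(h_1, \ldots, h_d)$ is the good basis for $N \cap H$ encoded in $\tuple{\lambda}$, the homomorphism property gives $\chi(h) = \mu_1 \xi_1 + \cdots + \mu_d \xi_d \pmod{\Z_p}$, and this equality modulo $\Z_p$ is definable in $\man$ since $\Z_p$ is. The tuple $(\mu_1, \ldots, \mu_d)$ depends definably on $h$ because the maps $\lambda \mapsto h_i^\lambda$ are analytic in $\Z_p$-coordinates (\cite[Lemma~1.19]{duSautoy-rationality}) and the coordinates are unique. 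The analogous expression with $\tuple{\lambda}'$ and $\tuple{\xi}'$ computes $\chi'(\leftexp{g}{h})$.

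Assembling these pieces yields an $\Lan$-formula in the variables $(\tuple{\lambda}, \tuple{\xi}, \tuple{\lambda}', \tuple{\xi}')$ that defines $\eqrel$. I do not expect any serious obstacle: Lemma~\ref{lem:conj_induced} delivers precisely a first-order criterion, and the definability toolkit developed for Proposition~\ref{pro:X_definable} suffices to interpret every clause of that criterion. The only point requiring mild care is to ensure that conjugation and exponentiation by $\Z_p$-parameters are encoded using the function symbols already available in $\lan_N$ (and hence in $\man$ via Lemma~\ref{lem:int_M_N}), but this mirrors what has already been done for $K$-stability in condition~\ref{pro:X_definable_3} of Proposition~\ref{pro:X_definable}.
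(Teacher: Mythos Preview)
Your proposal is correct and follows essentially the same approach as the paper: apply Lemma~\ref{lem:conj_induced} to reduce the equality of induced characters to the first-order condition
\[
\exists\, g \in N,\ \forall\, h\in N \cap H\ \left(\leftexp{g}{h} \in N \cap H' \Longrightarrow \chi(h) = \chi'(\leftexp{g}{h})\right),
\]
then observe that all constituents (membership in $N\cap H$, $N\cap H'$, conjugation, values of $\chi$ and $\chi'$) are already known to be $\Lan$-definable from the proof of Proposition~\ref{pro:X_definable}. The paper's proof is terser---it simply says ``Writing this in the $\Z_p$-coordinates of $N$ we obtain an $\Lan$-formula''---while you spell out the definability of each clause, but the substance is identical.
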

		\begin{proof}
		Let $(H, \chi), (H', \chi')$ be as above. Lemma~\ref{lem:conj_induced} 
		implies that $\Ind_{N\cap H}^N \chi = \Ind_{N\cap H'}^N \chi'$ if and only if 
			\[
				\exists\, g \in N,\ \forall\, h\in N \cap H\ \left(\leftexp{g}{h} \in N \cap H' \Longrightarrow \chi(h) = \chi'(\leftexp{g}{h})\right).
			\]
		Writing this in the $\Z_p$-coordinates of $N$ we obtain an $\Lan$-formula, which, after restricting  to 
		the definable set $\cD^c$, gives the $\Lan$-formula defining 
		$\eqrel$.
		\end{proof}
Composing $\Psi$  with the surjective map $X_K\rightarrow \Irr_K(N)$ of Corollary~\ref{cor:surj_coho}
induces a bijection between the set of equivalence classes $\cD^c/\eqrel$ and  $\Irr_K^c(N)$. We now use 
this bijection to produce a definable family of equivalence relations giving the partial zeta series. 
For $(\tuple{\lambda}, \tuple{\xi}) \in \cD^c$,
write $(h_1(\tuple{\lambda}), \dots, h_d(\tuple{\lambda}))$ for the good basis associated 
with $\tuple{\lambda}$ by Proposition~\ref{pro:X_definable}~\ref{pro:X_definable_1}. The function $f:\cD^c\rightarrow \N_0$ given by

	\[
	 (\tuple{\lambda}, \tuple{\xi}) \longmapsto \sum_{i = 1}^{d} {\omega(h_i(\tuple{\lambda})) - 1}
	\]
is definable  in $\man$ because $\struc_N$ is definably interpreted in $\man$ and, 
under this interpretation, $\omega$ becomes a definable function 
by \cite[Theorem~1.18\,(iv)]{duSautoy-rationality}. Notice that, if $\Psi(\tuple{\lambda}, \tuple{\xi}) = (H, \chi)$, 
then, by the discussion preceding \cite[Lemma~2.8]{duSautoy-rationality}, $p^{f(\tuple{\lambda}, \tuple{\xi})}$ 
is the index of $N\cap H$ in $N$, which is equal to the degree of $\Ind_{N\cap H}^{N} \chi$.\par 
We now define a family of equivalence relations that will allow us to apply Theorem~\ref{thm:rational_series} to  $\zetapartial$. Our approach 
is the same as in \cite{hrumar2015definable} after Remark~6.3 but for a fixed prime $p$. We set 
	$
		D^c 
		\subseteq \Q_p^{d(d+r+1)} \times \N_0
	$ 
to be the graph of $f$ and 
define an equivalence relation $E \subseteq D^c \times D^c$ by 
	\[
		((x, n), (x',n')) \in E^c \iff (x,x') \in \eqrel.
	\] 
Notice that, unlike \cite{hrumar2015definable}, we do not require $n = n'$ here, as this condition is vacuous whenever $(x,x') \in \eqrel$.
Clearly $D^c$ is a definable family of subsets of $\Q_p^{d(d+r+1)}$ and $E^c$ is a definable family of 
equivalence relations on $D^c$.\par
For all $n \in \N_0$ the fibre of $f$ above $n$ 
is a union of $\eqrel$-equivalence classes. 
Therefore
the set $D^c_n/E^c_n$ is in bijection with the subset of characters of degree $p^n$ in $\Irr_K^c(N)$.
It follows that
	\[
		\zetapartial = \sum_{n \in \N_0} \#(D^c_n/E^c_n) p^{-ns}.
	\] 
Applying Theorem~\ref{thm:rational_series} to the series above
we deduce that $\zetapartial$ is a rational function in $p^{-s}$. This concludes the proof.
\begin{acknowledgement*}
We thank Benjamin Martin for helpful suggestions and Andrei Jaikin-Zapirain for reading a preliminary version of 
this paper and giving valuable comments. We also thank Marcus du Sautoy for answering our questions about bases, 
Gabriel Navarro for his comments on Proposition~\ref{prop:Linearisation}, Raf Cluckers for answering our questions 
about \cite[Theorem~A.2]{hrumar2015definable}, and Benjamin Klopsch for helpful discussions.

The second author was financially supported by Project G.0792.18N of the Research Foundation - Flanders (FWO), 
by the Hausdorff Research Institute for Mathematics (Universit\"at Bonn), by the University of Auckland, and by Imperial College London. 
Part of the work on this paper was funded by a Durham University Travel Grant and LMS Scheme 4 grant 41678.
\end{acknowledgement*}
\bibliographystyle{abbrv}

\begin{thebibliography}{10}

\bibitem{Aizenbud-Avni-2016}
A.~Aizenbud and N.~Avni.
\newblock Representation growth and rational singularities of the moduli space
  of local systems.
\newblock {\em Invent. Math.}, 204(1):245--316, 2016.

\bibitem{Avni-abscissa}
N.~Avni.
\newblock Arithmetic groups have rational representation growth.
\newblock {\em Ann. of Math. (2)}, 174(2):1009--1056, 2011.

\bibitem{AKOV-Duke}
N.~Avni, B.~Klopsch, U.~Onn, and C.~Voll.
\newblock Representation zeta functions of compact {$p$}-adic analytic groups
  and arithmetic groups.
\newblock {\em Duke Math. J.}, 162(1):111--197, 2013.

\bibitem{AKOV-GAFA}
N.~Avni, B.~Klopsch, U.~Onn, and C.~Voll.
\newblock Arithmetic groups, base change, and representation growth.
\newblock {\em Geom. Funct. Anal.}, 26(1):67--135, 2016.

\bibitem{BassLubotzkyMagidMozes}
H.~Bass, A.~Lubotzky, A.~R. Magid, and S.~Mozes.
\newblock The proalgebraic completion of rigid groups.
\newblock In {\em Proceedings of the {C}onference on {G}eometric and
  {C}ombinatorial {G}roup {T}heory, {P}art {II} ({H}aifa, 2000)}, volume~95,
  pages 19--58, 2002.

\bibitem{bud2021rational}
N.~Budur.
\newblock {Rational Singularities, Quiver Moment Maps, and Representations of
  Surface Groups}.
\newblock {\em International Mathematics Research Notices},
  2021(15):11782--11817, 10 2019.

\bibitem{Clifford-1937}
A.~H. Clifford.
\newblock Representations induced in an invariant subgroup.
\newblock {\em Ann. of Math. (2)}, 38(3):533--550, 1937.

\bibitem{Denef-vandenDries}
J.~Denef and L.~van~den Dries.
\newblock {$p$}-adic and real subanalytic sets.
\newblock {\em Ann. of Math. (2)}, 128(1):79--138, 1988.

\bibitem{DdSMS}
J.~D. Dixon, M.~P.~F. du~Sautoy, A.~Mann, and D.~Segal.
\newblock {\em Analytic pro-{$p$} groups}, volume~61 of {\em Cambridge Studies
  in Advanced Mathematics}.
\newblock Cambridge University Press, Cambridge, second edition, 1999.

\bibitem{duSautoy-Segal-in-Horizons}
M.~du~Sautoy and D.~Segal.
\newblock Zeta functions of groups.
\newblock In {\em New horizons in pro-{$p$} groups}, volume 184 of {\em Progr.
  Math.}, pages 249--286. Birkh\"auser Boston, Boston, MA, 2000.

\bibitem{duSautoy-rationality}
M.~P.~F. du~Sautoy.
\newblock Finitely generated groups, {$p$}-adic analytic groups and
  {P}oincar\'e series.
\newblock {\em Ann. of Math. (2)}, 137(3):639--670, 1993.

\bibitem{JKGS-zero-at-2}
J.~Gonz{\'a}lez-S{\'a}nchez, A.~Jaikin-Zapirain, and B.~Klopsch.
\newblock The representation zeta function of a {FA}b compact {$p$}-adic {L}ie
  group vanishes at {$-2$}.
\newblock {\em Bull. Lond. Math. Soc.}, 46(2):239--244, 2014.

\bibitem{hod1993model}
W.~Hodges.
\newblock {\em Model theory}, volume~42 of {\em Encyclopedia of Mathematics and
  its Applications}.
\newblock Cambridge University Press, Cambridge, 1993.

\bibitem{hrumar2015definable}
E.~Hrushovski, B.~Martin, and S.~Rideau.
\newblock Definable equivalence relations and zeta functions of groups (with an
  appendix by {R}af {C}luckers).
\newblock {\em J. Eur. Math. Soc. (JEMS)}, 20(10):2467--2537, 2018.

\bibitem{Isaacs}
I.~M. Isaacs.
\newblock {\em Character Theory of Finite Groups}.
\newblock Pure and Applied Mathematics, No. 69. Academic Press, New York, 1976.

\bibitem{Jaikin-zeta}
A.~Jaikin-Zapirain.
\newblock Zeta function of representations of compact {$p$}-adic analytic
  groups.
\newblock {\em J. Amer. Math. Soc.}, 19(1):91--118, 2006.

\bibitem{Karpilovsky2}
G.~Karpilovsky.
\newblock {\em Group representations. {V}ol. 2}, volume 177 of {\em
  North-Holland Mathematics Studies}.
\newblock North-Holland Publishing Co., Amsterdam, 1993.

\bibitem{Karpilovsky3}
G.~Karpilovsky.
\newblock {\em Group representations. {V}ol. 3}, volume 180 of {\em
  North-Holland Mathematics Studies}.
\newblock North-Holland Publishing Co., Amsterdam, 1994.

\bibitem{kioklo2019admissible}
S.~Kionke and B.~Klopsch.
\newblock Zeta functions associated to admissible representations of compact
  {$p$}-adic {L}ie groups.
\newblock {\em Trans. Amer. Math. Soc.}, 372(11):7677--7733, 2019.

\bibitem{Lang-Algebra}
S.~Lang.
\newblock {\em Algebra}.
\newblock GTM 211. Springer-Verlag, New York, 2002.

\bibitem{Larsen-Lubotzky}
M.~Larsen and A.~Lubotzky.
\newblock Representation growth of linear groups.
\newblock {\em J. Eur. Math. Soc.}, 10(2):351--390, 2008.

\bibitem{mar2006model}
D.~{Marker}.
\newblock {\em {Model theory: An introduction.}}, volume 217.
\newblock New York, NY: Springer, 2002.

\bibitem{Nagao-Tsushima}
H.~Nagao and Y.~Tsushima.
\newblock {\em Representations of finite groups}.
\newblock Academic Press, Inc., Boston, MA, 1989.

\bibitem{ngu2016uniform}
K.~H. Nguyen.
\newblock Uniform rationality of the poincar{\'e} series of definable, analytic
  equivalence relations on local fields.
\newblock {\em Transactions of the American Mathematical Society, Series B},
  2016.

\bibitem{Suzuki}
M.~Suzuki.
\newblock {\em Group theory. {I}}, volume 247 of {\em Grundlehren der
  Mathematischen Wissenschaften}.
\newblock Springer-Verlag, Berlin-New York, 1982.

\bibitem{tenzie2012model}
K.~Tent and M.~Ziegler.
\newblock {\em A course in model theory}, volume~40 of {\em Lecture Notes in
  Logic}.
\newblock Association for Symbolic Logic, La Jolla, CA; Cambridge University
  Press, Cambridge, 2012.

\bibitem{zor2016adjoint}
M.~Zordan.
\newblock Adjoint orbits of matrix groups over finite quotients of compact
  discrete valuation rings and representation zeta functions.
\newblock {\em Indiana Univ. Math. J.}, 67:1683--1709, 2018.

\bibitem{zor2022univariate}
M.~Zordan.
\newblock Univariate and bivariate zeta functions of unipotent group schemes of
  type {G}.
\newblock {\em International Journal of Algebra and Computation},
  32(04):653--682, 2022.

\end{thebibliography}
\def\cprime{$'$}

\end{document}